\documentclass[reqno, 12pt]{amsart}

\usepackage{amssymb}
\usepackage{fullpage}

% predefined color names in LATeX
% black, blue, brown, cyan, darkgray, gray, green, lightgray, lime, magenta, olive, orange, pink, purple, red, teal, violet, white, yellow.

% for better links, back references and such
\usepackage{xcolor}
\usepackage{hyperref}
\hypersetup{colorlinks=true,citecolor=purple,linkcolor=purple,urlcolor=gray,breaklinks=true}

% for displaying content of \label, \cite and \ref
%\usepackage[color,notcite,notref]{showkeys} % only displays \label
%\definecolor{labelkey}{rgb}{0.5,0.5,0.5} % used for \label
%\definecolor{refkey}{rgb}{0,0,1} % used for \cite an \ref

%\usepackage{amsmath}

\numberwithin{equation}{section}
\numberwithin{table}{section}
\newtheorem{thm}{Theorem}[section]
\newtheorem{prop}[thm]{Proposition}
\newtheorem{lem}[thm]{Lemma}
\newtheorem{cor}[thm]{Corollary}

\def\ldiv{\backslash}
\def\rdiv{/}
\newcommand{\m}{^{-1}}
\newcommand{\vhi}{\varphi}

\newcommand{\mlt}{\operatorname{Mlt}}
\newcommand{\fix}{\operatorname{Fix}}
\newcommand{\inn}{\operatorname{Inn}}
\newcommand{\atp}{\operatorname{Atp}}
\newcommand{\aut}{\operatorname{Aut}}
\newcommand{\id}{\operatorname{id}}

\newcommand{\nuc}{\operatorname{Nuc}}
\newcommand{\lps}{\operatorname{Psa}_\ell}

\newcommand{\lnuc}{\operatorname{Nuc}_\ell}

\newcommand{\indmod}[1]{\mathrm{mod}_{#1}}

\newcommand{\cref}[1]{Corollary~\ref{#1}}
\newcommand{\lref}[1]{Lemma~\ref{#1}}

\newcommand{\eref}[1]{(\ref{e#1})}

\newcommand{\pseu}{pseudo\-auto\-morphism}

\begin{document}
\title[Congruence solvability in Moufang loops]
{Congruence solvability in finite Moufang loops\\ of order coprime to three}
\author{Ale\v s Dr\'apal}
\address[Dr\'apal]{Dept.~of Mathematics\\ Charles University\\ Sokolovsk\'a 83\\
 186 75 Praha 8, Czech Republic}
\email[Dr\'apal]{drapal@karlin.mff.cuni.cz}

\author{Petr Vojt\v echovsk\'y}
\address[Vojt\v{e}chovsk\'y]{Dept.~of Mathematics\\ University of Denver\\ 2390
S.~York St.\\ Denver, CO 80208, USA}
\email[Vojt\v{e}chovsk\'y]{petr@math.du.edu}

\thanks{A.~Dr\'apal supported by the INTER-EXCELLENCE project LTAUSA19070 of M\v SMT Czech Republic. P.~Vojt\v{e}chovsk\'y supported by the Simons Foundation Mathematics and Physical Sciences Collaboration Grant for Mathematicians no.~855097 and by the PROF grant of the University of Denver.}

\keywords{Congruence solvability, solvability, abelian congruence, abelian extension, Moufang loop, $3$-divisible Moufang loop}

\subjclass{20N05}

\begin{abstract}
We prove that a normal subloop $X$ of a Moufang loop $Q$ induces an abelian congruence of $Q$ if and only if each inner mapping of $Q$ restricts to an automorphism of $X$ and $u(xy) = (uy)x$ for all $x,y\in X$ and $u\in Q$. The former condition can be omitted when $X$ is $3$-divisible. This characterization is then used to show that classically solvable finite $3$-divisible Moufang loops are congruence solvable.
\end{abstract}

\maketitle

\section{Introduction}\label{i}

There are two notions of solvability in loop theory. \emph{Classical solvability} generalizes the standard definition of group theory, so a loop $Q$ is \emph{classically solvable} if there exists a series $Q=Q_0\ge Q_1\ge\cdots\ge Q_n=1$ such that $Q_{i+1}\unlhd Q_i$ and $Q_i/Q_{i+1}$ is a commutative group. Equivalently, a loop $Q$ is classically solvable it there exists a series $Q=Q_0\ge Q_1\ge\cdots\ge Q_n=1$ such that $Q_{i+1}\unlhd Q$ and $Q_i/Q_{i+1}$ is a commutative group. \emph{Congruence solvability} specializes a definition from congruence modular varieties, so a loop $Q$ is \emph{congruence solvable} if there exists a series $Q=Q_0\ge Q_1\ge\cdots\ge Q_n=1$ such that $Q_{i+1}\unlhd Q$ and the normal subloop $Q_i/Q_{i+1}$ of $Q/Q_{i+1}$ induces an abelian congruence of $Q/Q_{i+1}$.

Every congruence solvable loop is classically solvable but the converse does not hold in general. It is an open problem whether every classically solvable Moufang loop is congruence solvable. We proved in \cite{dravojt} that the two notions of solvability coincide in Moufang loops of odd order and in $6$-divisible Moufang loops.

A classically solvable nontrivial Moufang loop $Q$ contains a nontrivial commutative subgroup $X$ that is normal in $Q$. If $Q$ is also congruence solvable, $X$ can be assumed to satisfy additional properties which are easy to express (cf.~Theorem \ref{a2}) and which have structural implications for $Q$. Whether the two notions of solvability coincide in Moufang loops or not, congruence solvability has the potential to significantly influence and deepen the theory of Moufang loops.

\bigskip

In this paper we characterize normal subloops of $3$-divisible Moufang loops that induce an abelian congruence, and then we prove that the two notions of solvability coincide in finite $3$-divisible Moufang loops, improving upon the $6$-divisibility result of \cite{dravojt} in the finite case.

\bigskip

Here is a summary of the paper. The congruence commutator theory for loops was described in \cite{svcom}. In the follow-up paper \cite{svab}, Stanovsk\'y and Vojt\v echovsk\'y listed several equivalent conditions that characterize a normal subloop $X$ of $Q$ that induces an abelian congruence of $Q$, cf.~\cite[Theorem 4.1]{svab}. We record the relevant parts of their result as Theorem \ref{Th:svab}. Building upon Theorem \ref{Th:svab}, we show that in the case of Moufang loops, a normal subloop $X$ induces an abelian congruence of $Q$ if and only if (i) each inner mapping of $Q$ restricts to an automorphism of $X$, and (ii) $u(xy) =(uy)x$ for all $x,y\in X$ and $u\in Q$, cf.~Theorem \ref{a2}. We can further improve upon Theorem \ref{a2} in the case of $3$-divisible Moufang loops when condition (i) is automatically satisfied, cf.~Theorem \ref{a6}.

Given a $3$-divisible Moufang loop $Q$ and its normal subloop $S$, we then construct a certain subnormal series \eqref{Eq:Series} in the multiplication group $\mlt(Q)$ that starts with the relative multiplication group $\mlt_Q(S)$. One of the intermediate subloops of the series is constructed upon considering pseudoautomorphisms of $Q$.

In Section \ref{c} we prove that every classically solvable finite $3$-divisible Moufang loop is congruence solvable, cf.~Theorem \ref{Th:Equivalent}. The proof is an induction on the order of a smallest counterexample $Q$. As the three Isomorphism Theorems and the Correspondence Theorem hold in loops, the standard proof from group theory goes through to establish the following result: If $X$ is a normal subloop of a loop $Q$, then $Q$ is classically solvable if and only if both $X$ and $Q/X$ are classically solvable. Similarly, if $X$ is a normal subloop of $Q$ that induces an abelian congruence of $Q$, then $Q$ is congruence solvable if and only if both $X$ and $Q/X$ are congruence solvable.

It therefore suffices to find a nontrivial normal subloop $X$ of $Q$ that induces an abelian congruence of $Q$. If the nucleus $\nuc(Q)$ of $Q$ is nontrivial, it contains a nontrivial normal commutative subgroup $X$, and every such subgroup induces an abelian congruence of $Q$ by Proposition \ref{u4}.

If $\nuc(Q)$ is trivial then $\mlt(Q)$ admits certain triality automorphisms by \cite[Theorem 6]{GlaubermanII}. We study Moufang loops that admit triality automorphisms in the short Section \ref{triality}. Continuing the inductive proof, we then show that $Q$ contains a nontrivial normal $p$-subgroup $S$. The series \eqref{Eq:Series} yields  a subnormal series that starts with the $p$-group $\mlt_Q(S)$ (cf.~Theorem \ref{c1}) and terminates with $\mlt(Q)\rtimes S_3$, the multiplication group of $Q$ extended by the triality automorphisms. This gives rise to a nontrivial normal triality $p$-subgroup $U$ of $\mlt(Q)$ (cf.~Proposition \ref{c2}). Finally, $X=U(1)$ is then the sought-after normal subloop of $Q$ that induces an abelian congruence of $Q$ (cf.~Proposition \ref{u6}, whose proof uses the characterization of Theorem \ref{a6}).

The main results of this paper are Theorems \ref{a2}, \ref{a6} and \ref{Th:Equivalent}. However, many of the auxiliary results are likely to be useful in future investigations of congruence solvability in Moufang loops and in our understanding of the structure of Moufang loops.

\section{A characterization of abelian congruences in Moufang loops}\label{a}

See \cite{Bruck,Pflugfelder} for an introduction to loop theory and \cite{FM} for an introduction to commutator theory in congruence modular varieties.

Let $Q=(Q,\cdot,\ldiv,\rdiv,1)$ be a loop, a universal algebra satisfying the identities $x\ldiv (x\cdot y) = x\cdot (x\ldiv y) = y = (y\cdot x)/x = (y/x)\cdot x$ and $x\cdot 1 = x = 1\cdot x$ for all $x,y \in Q$. As usual, we also denote the multiplication operation $\cdot$ by juxtaposition and we assume that juxtaposition is more binding than the operations $\cdot$, $\ldiv$ and $\rdiv$. For instance, $xy \cdot (u\ldiv v) = (x\cdot y)\cdot (u\ldiv v)$.

A loop $Q$ is an \emph{inverse property loop} if for every $x\in Q$ there is $x\m\in Q$ such that $x\m\cdot xy = y = yx \cdot x\m$. This implies $1=xx\m = x\m x$, $(x\m)\m=x$, $x\ldiv y = x\m y$ and $x/y = xy\m$. Inverse property loops can therefore we treated as universal algebras in the signature $(\cdot, { }\m,1)$ familiar from groups.

An associative subloop of a loop $Q$ will be refereed to as a \emph{subgroup} of $Q$. A loop $Q$ is \emph{power associative} if any element of $Q$ generates a subgroup, and \emph{diassociative} if any two elements of $Q$ generate a subgroup. We will often not specify unnecessary parentheses in diassociative loops, e.g., we write $xyx$ instead of $x\cdot yx$ or $xy\cdot x$.

The loop identities
\begin{equation}\label{Eq:M}\tag{M}
    x (y\cdot xz) = (xy\cdot x)z,\ (zx \cdot y)x = z(x\cdot yx),\ x(yz\cdot x) =xy\cdot zx, (x\cdot yz) x = xy \cdot zx
\end{equation}
are pairwise equivalent and a loop satisfying any one (and hence all) of the identities is called a \emph{Moufang} loop. Every Moufang loop is diassociative \cite{Moufang}.

A power associative loop $Q$ is \emph{$d$-divisible} if the mapping $x\mapsto x^d$ is onto $Q$. We claim that a finite Moufang loop $Q$ is $3$-divisible if and only if the order of $Q$ is coprime to $3$. Certainly if $Q$ is not $3$-divisible then there are $x\ne y$ such that $x^3=y^3$, so the group $\langle x,y\rangle$ is not $3$-divisible, it contains an element of order $3$, and hence $3$ divides $|Q|$ by the elementwise Lagrange theorem for Moufang loops. Conversely, if $3$ divides $|Q|$ then $Q$ contains an element of order $3$ by \cite[Lemma 4]{Doro} and hence $Q$ is not $3$-divisible.

If $u$ is an element of a loop $Q$, then the \emph{left translation} and the \emph{right translations} by $u$ are defined by $L_u(v)=uv$ and $R_u(v)=vu$, respectively. The group $\mlt(Q) = \langle L_u,R_u:u\in Q\rangle$ is the \emph{multiplication group} of $Q$. An element of $\mlt(Q)$ that fixes $1$ is called an \emph{inner mapping}. The inner mappings of $Q$ form a subgroup $\inn(Q)$ of $\mlt(Q)$. It is well known that $\inn(Q) = \langle T_u,\,L_{u,v},\,R_{u,v}:u,v\in Q\rangle$, where
\begin{equation}\label{ea1}
    T_u = R_u^{-1}L_u,\ L_{u,v} = L_{uv}\m L_u L_v\text{ and }R_{u,v} = R_{uv}\m R_v R_u,
\end{equation}
are the \emph{standard generators} of $\inn(Q)$.

Let $\mathbf A$ be a universal algebra on an underlying set $A$. A congruence of $\mathbf A$ is an equivalence relation on $A$ that commutes with all operations of $\mathbf A$. As in \cite{FM}, a congruence $\alpha$ \emph{centralizes} congruence $\beta$ \emph{over} congruence $\gamma$ if for any $(n{+}1)$ary term operation $t$ in the signature of $\mathbf A$ the following implication holds for all $a,b,u_1,v_1,\dots,u_n,v_n\in A$ satisfying $a\,\alpha\,b$ and
$u_1\, \beta\, v_1$, $\dots$, $u_n\,\beta\,v_n$:
\begin{equation}\label{ea3}
t(a,u_1,\dots,u_n)\;\delta\;t(a,v_1,\dots,v_n)\ \Rightarrow \ t(b,u_1,\dots,u_n)\;\delta\;t(b,v_1,\dots,v_n)
\end{equation}
A congruence $\alpha$ is \emph{abelian} if the condition \eqref{ea3} holds with $\beta=\alpha$ and $\delta=\{(a,a):a\in A\}$.

There is a one-to-one correspondence between normal subloops of $Q$ and congruences of $Q$. For $X\unlhd Q$, the induced congruence $\indmod{X}$ is defined by $u\,\indmod{X}\,v$ iff $uX=vX$. For a congruence $\alpha$ of $Q$, the induced normal subloop is the equivalence class of $\alpha$ containing $1$.

Let $X\unlhd Q$. If $\indmod{X}$ is an abelian congruence of $Q$ then $X$ is a commutative group, but the converse is not true in general loops, not even in Moufang loops.

The following result is excerpted from \cite[Theorem 4.1]{svab}. It uses the somewhat unusual commutators and associators employed in \cite{svab}, namely, $[x,y] = ((xy)\rdiv x)\rdiv y$ and $[x,y,z] = ((xy\cdot z)/(yz))/x$. Note that we have $[x,y]=1$ iff $xy=yx$ and $[x,y,z]=1$ iff $xy\cdot z = x\cdot yz$.

\begin{thm}[\cite{svab}]\label{Th:svab}
Let $X$ be a normal subloop of a loop $Q$. Then the following conditions are equivalent:
\begin{enumerate}
\item[(i)] $X$ induces an abelian congruence of $Q$,
\item[(ii)] every inner mapping of $Q$ restricts to an automorphism of $X$ and for every $x,y\in X$ and $u,v,w\in Q$ with $v\,\indmod{X}\,w$ we have $[x,y] = [x,y,u] = [x,u,y] = [u,x,y]=1$ and $[x,u,v] = [x,u,w]$,
\item[(iii)] $Q$ is an \emph{abelian extension} of $X$ by $Q/X$, that is, $X$ is a commutative group, there is a transversal $T$ to $X$ in $Q$ containing $1$, for every $r,s\in T$ there are automorphisms $\varphi_{r,s},\psi_{r,s}\in\aut(X)$ and $\theta_{r,s}\in X$ such that $\varphi_{r,1} = \psi_{1,s}=\id_X$, $\theta_{1,s}=\theta_{r,1}=1$, and $rx\cdot sy = t\cdot \varphi_{r,s}(x)\psi_{r,s}(y)\theta_{r,s}$, where $t$ is the unique element of $T\cap (rs)X$.
\end{enumerate}
\end{thm}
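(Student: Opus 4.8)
The plan is to prove the cycle (iii) $\Rightarrow$ (i) $\Rightarrow$ (ii) $\Rightarrow$ (iii), working inside the variety of loops, which is congruence permutable (Mal'cev term $p(x,y,z) = x(y\ldiv z)$) and hence congruence modular, so the commutator calculus of \cite{FM} is available. I would use two standard facts from that theory. First, the term condition \eqref{ea3} for an abelian congruence is equivalent to its analogue for polynomial operations, since a parameter $c$ of a polynomial is $\indmod X$-related to itself and can be adjoined as an extra argument of a term; I will therefore apply \eqref{ea3} freely to polynomials such as $(x,y)\mapsto rx\cdot sy$ with parameters $r,s\in Q$. Second, if $\indmod X$ is abelian then the block of $1$ carries the structure of a commutative group with zero $1$ and addition the polynomial $p(x,1,y) = xy$, so in particular $X$ is a commutative group, and every polynomial of $Q$ restricts, on each tuple of $\indmod X$-blocks, to an affine map of commutative groups.

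For (iii) $\Rightarrow$ (i) I would show by structural induction on polynomials that each polynomial of $Q$ restricts to an affine map on tuples of blocks. Reading a block $qX$ through the bijection $x\mapsto rx$ from $X$, where $r\in T$ represents $qX$, the projections and the constant $1$ are affine and composition preserves affineness; the only point is multiplication, and the defining identity $rx\cdot sy = t\cdot\varphi_{r,s}(x)\psi_{r,s}(y)\theta_{r,s}$ exhibits it as affine in $x$ and in $y$ because $\varphi_{r,s},\psi_{r,s}\in\aut(X)$; the divisions $\ldiv,\rdiv$, being determined by multiplication, are affine as well. The term condition \eqref{ea3} with $\delta$ the identity then follows formally: if a polynomial $p$ yields affine maps $g(z) = p(z,\bar u)$ and $h(z) = p(z,\bar v)$ on a block for $\indmod X$-related tuples $\bar u,\bar v$, then $g - h$ is constant in $z$, so $g(a) = h(a)$ implies $g(b) = h(b)$.

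For (i) $\Rightarrow$ (ii) I would specialize. The loop commutator $[x,y] = ((xy)\rdiv x)\rdiv y$ and the loop associator $[x,y,z]$ are polynomials; affineness of their restrictions to $X$ (equivalently, \eqref{ea3} applied to them) forces $[x,y] = [x,y,u] = [x,u,y] = [u,x,y] = 1$ for all $x,y\in X$ and $u\in Q$, while \eqref{ea3} applied to the unary polynomial $z\mapsto[x,u,z]$ gives $[x,u,v] = [x,u,w]$ whenever $v\,\indmod X\,w$. Finally, each standard generator $T_u,L_{u,v},R_{u,v}$ of \eqref{ea1} fixes $1$ and restricts to an affine self-map of $X$, hence to a group automorphism, so every inner mapping of $Q$ restricts to an automorphism of $X$.

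The step I expect to be the main obstacle is (ii) $\Rightarrow$ (iii): reconstructing the affine extension from the bare identities of (ii). Given (ii), the three associator identities with arguments in $X$ make $X$ associative, and $[x,y] = 1$ makes it a commutative group. Fix a transversal $T\ni 1$; for $r,s\in T$ let $t\in T$ represent $(rs)X$ and put $h_{r,s}(x,y) = t\ldiv(rx\cdot sy)\in X$, $\theta_{r,s} = h_{r,s}(1,1)$, $\varphi_{r,s}(x) = h_{r,s}(x,1)\theta_{r,s}\m$ and $\psi_{r,s}(y) = \theta_{r,s}\m h_{r,s}(1,y)$. It remains to prove that $h_{r,s}$ is bi-affine --- concretely, that $h_{r,s}(x,y)\,h_{r,s}(1,1) = h_{r,s}(x,1)\,h_{r,s}(1,y)$ and that $x\mapsto h_{r,s}(x,1)$ and $y\mapsto h_{r,s}(1,y)$ are endomorphisms of the group $X$. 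Granting this, one obtains $rx\cdot sy = t\cdot\varphi_{r,s}(x)\psi_{r,s}(y)\theta_{r,s}$, the maps $\varphi_{r,s},\psi_{r,s}$ are automorphisms (bijectivity coming from $x\mapsto rx\cdot s$ and $y\mapsto r\cdot sy$ being bijections of $Q$ onto the block $tX$), and the normalizations $\varphi_{r,1} = \psi_{1,s} = \id_X$, $\theta_{1,s} = \theta_{r,1} = 1$ drop out of $1\in T$. The delicate point is extracting the global bi-affinity and endomorphism statements from the ``local'' identities of (ii). I would argue by induction on the length of a product $rx\cdot sy$, peeling off one $X$-factor at a time and using the instances $[u,x,y] = 1$, $[x,u,y] = 1$ and $[x,u,v] = [x,u,w]$ to verify that each peeling step changes the value by an amount independent of the factors still to be removed, the division symbols being absorbed because $X$ is already a commutative group; fitting each such step to one of the permitted identity patterns is where the real work lies.
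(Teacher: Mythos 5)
First, a point of comparison: the paper does not prove this theorem at all. It is quoted in excerpted form from Stanovsk\'y and Vojt\v{e}chovsk\'y \cite[Theorem 4.1]{svab}, so there is no in-paper argument to measure your proposal against; the actual proof lives in that reference. Your overall framework --- loops form a congruence permutable (hence modular) variety via the Mal'cev term $x(y\ldiv z)$, abelianness of $\indmod{X}$ is equivalent to polynomials acting affinely on blocks, and the cycle (iii) $\Rightarrow$ (i) $\Rightarrow$ (ii) $\Rightarrow$ (iii) --- is a sound way to organize such a proof, and your sketches of (iii) $\Rightarrow$ (i) and (i) $\Rightarrow$ (ii) are essentially correct: the induction over polynomials, and the specializations of the term condition to the commutator $[x,y]$, the associators, and the standard generators of $\inn(Q)$, all go through as you describe.

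The genuine gap is (ii) $\Rightarrow$ (iii), and you have located it yourself without closing it. Defining $h_{r,s}(x,y)=t\ldiv(rx\cdot sy)$ and asserting that its bi-affinity will follow by ``peeling off one $X$-factor at a time,'' with the admission that ``fitting each such step to one of the permitted identity patterns is where the real work lies,'' is a plan, not a proof. This implication is exactly where the content of the theorem sits: in a general loop no identities are available beyond the loop axioms, so every rearrangement of $rx\cdot sy$ must be licensed by one of the specific conditions in (ii), and the claim that each peeling step changes the value by an amount independent of the factors still to be removed is essentially a restatement of the bi-affinity you are trying to prove; asserting it without exhibiting the chain of identities leaves the main step unestablished. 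For a sense of how much work the peeling actually takes, compare the proof of Theorem \ref{a2} in the paper, which carries out precisely such a computation for the special case of Moufang loops and already needs the full strength of the Moufang identities together with the inner-mapping hypothesis. To complete your argument you would have to write out, step by step, the conversion of $rx\cdot sy$ into $t\cdot\varphi_{r,s}(x)\psi_{r,s}(y)\theta_{r,s}$ with each step justified by a named condition from (ii); that computation is the substance of \cite[Theorem 4.1]{svab} and is absent from your proposal.
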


We can substantially simplify condition (ii) in the case of Moufang loops:

\begin{thm}\label{a2}
Let $X$ be a normal subloop of a Moufang loop $Q$. Then $X$ induces an abelian congruence of $Q$ if and only if every inner mapping of $Q$ restricts to an automorphism of $X$ and $u\cdot xy = uy\cdot x$ for all $u \in Q$ and $x,y\in X$.
\end{thm}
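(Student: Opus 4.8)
The plan is to derive both directions from \tref{Th:svab}, using two structural features of Moufang loops: diassociativity, and Moufang's theorem in the form that three elements which associate in one order generate a group. Recall that $[a,b,c]=1$ precisely when $ab\cdot c=a\cdot bc$. For the forward implication, assume $X$ induces an abelian congruence of $Q$. By \tref{Th:svab}(ii), every inner mapping of $Q$ restricts to an automorphism of $X$, and $[x,y]=1=[u,x,y]$ for all $x,y\in X$ and $u\in Q$. The first identity says $X$ is commutative, so $u\cdot xy=u\cdot yx$; the second, with $x$ and $y$ interchanged, says $u\cdot yx=uy\cdot x$. Hence $u\cdot xy=uy\cdot x$.

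\textbf{The converse: reduction to a cocycle identity.} Conversely, assume every inner mapping of $Q$ restricts to an automorphism of $X$ and $u\cdot xy=uy\cdot x$ for all $u\in Q$, $x,y\in X$; we verify condition (ii) of \tref{Th:svab}. Taking $u=1$ yields $xy=yx$, so $X$ is commutative, and then $u\cdot yx=u\cdot xy=uy\cdot x$, i.e.\ $[u,x,y]=1$, for all $u\in Q$ and $x,y\in X$. By Moufang's theorem $\langle u,x,y\rangle$ is then a group for all such $u,x,y$, so every associator formed from $u,x,y$ vanishes; in particular $[x,y,u]=[x,u,y]=1$ for all $x,y\in X$, $u\in Q$, and (taking $u\in X$) $X$ is a commutative group. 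Hence the only remaining clause of (ii) is the cocycle identity $[x,u,v]=[x,u,w]$ for $x\in X$, $u,v,w\in Q$ with $v\,\indmod{X}\,w$.

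\textbf{The cocycle identity and the main obstacle.} Write $v=wz$ with $z=w\ldiv v\in X$. The strategy is to expand $[x,u,wz]$ and $[x,u,w]$ by pushing $X$-elements through the products that occur: whenever a three-factor subproduct has two factors in $X$, the corresponding associator has two entries in $X$ and so vanishes by the previous step, so that subproduct lies in a group and may be reassociated freely. Carrying this out with the identity $a\m((ab)c)=(ba\m)(ac)$ (valid in every Moufang loop) and with the antiautomorphic inverse property, one finds that $[x,u,wz]=[x,u,w]$ is equivalent to the identity $[xu,w,z]=[u,w,z]$ for all $x,z\in X$, $u,w\in Q$; that is, $[p,w,z]$ must depend only on the coset $pX$ when $z\in X$. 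This reduction is routine, but establishing the reduced identity is the step I expect to be the main obstacle: it genuinely intertwines $X$ with $Q\setminus X$, so the group property supplied by Moufang's theorem no longer does the job by itself. One natural route is to expand $[xu,w,z]$ through an associator-of-a-product identity for Moufang loops, observing that after substituting $x,u,w,z$ every associator appearing other than $[u,w,z]$ has two entries in $X$ and therefore vanishes; in any case, the argument needs a careful accounting of the $X$-valued correction terms, using throughout that inner mappings restrict to automorphisms of $X$.
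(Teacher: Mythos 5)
Your forward implication is fine and agrees with the paper's. For the converse, however, your proposal has a genuine gap, and you flag it yourself: after correctly disposing of the clauses $[x,y]=[u,x,y]=[x,u,y]=[x,y,u]=1$ of \tref{Th:svab}(ii) (the appeal to Moufang's theorem to get a group $\langle u,x,y\rangle$ from $[u,x,y]=1$ is legitimate), the remaining clause $[x,u,v]=[x,u,w]$ for $x\in X$ and $v\,\indmod{X}\,w$ is never established. You reduce it (without detailed verification) to $[xu,w,z]=[u,w,z]$ for $x,z\in X$, $u,w\in Q$, and then explicitly describe this as ``the main obstacle,'' offering only a speculative route via an unspecified associator-of-a-product identity. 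That identity is precisely the point where the hypothesis must interact nontrivially with elements outside $X$, so the proof is not complete as written.

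The paper avoids this clause entirely by verifying condition (iii) of \tref{Th:svab} instead of (ii): it fixes a transversal $T$ to $X$ containing $1$ and computes $rx\cdot sy$ directly, using the Moufang identities, the hypothesis $u\cdot xy=ux\cdot y$ (i.e.\ $[Q,X,X]=1$), commutativity of $X$, and the assumption that inner mappings restrict to automorphisms of $X$, arriving at $rx\cdot sy=t\cdot\vhi_{r,s}(x)\psi_{r,s}(y)\theta_{r,s}$ with $\vhi_{r,s}=L_{r,s}T_s^{-1}L_{s^{-1},r}|_X$, $\psi_{r,s}=L_{r,s}|_X$ and $\theta_{r,s}\in X$, together with the required normalizations. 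If you want to salvage your route through (ii), you would need to actually carry out and verify both the reduction and the identity $[xu,w,z]=[u,w,z]$; otherwise I recommend switching to the abelian-extension criterion (iii), where the needed computation closes.
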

\begin{proof}
The direct implication follows from Theorem \ref{Th:svab}(ii) as follows. Certainly every inner mapping of $Q$ restrict to an automorphism of $X$. Since $[x,y]=1$ for all $x,y\in X$, the normal subloop $X$ is commutative. Using commutativity and $[u,x,y]=1$ for $u\in Q$, $x,y\in X$, we deduce $u\cdot yx = u\cdot xy = ux\cdot y$.

For the converse implication, suppose that every inner mapping of $Q$ restricts to an automorphism of $X$ and $u\cdot xy = uy\cdot x$ for all $u \in Q$ and $x,y\in X$. The latter condition implies that $X$ is a commutative group. We will show that $Q$ is an abelian extension of $X$ by $Q/X$, as described in item (iii) of Theorem \ref{Th:svab}.

Let $T$ be a transversal to $X$ in $Q$ with $1\in T$. Let $r,s\in T$ and $x,y\in X$. In the following calculation we will use without reference the fact that every inner mapping of $Q$ restricts to an automorphism of $X$. We will also write $[Q,X,X]=1$ as a justification comment whenever we use $u\cdot xy = ux\cdot y$ with $u\in Q$ and $x,y\in X$. Finally, we omit parentheses in subterms involving only factors from the commutative group $X$. Now:
\begin{flalign*}
&& rx\cdot sy &= rx\cdot T_s(y)s = s(s^{-1}r\cdot L_{s^{-1},r}(x))\cdot T_s(y)s &&\\
&& &= s\cdot (s^{-1}r\cdot L_{s^{-1},r}(x))T_s(y)\cdot s &&\text{by \eqref{Eq:M}}\\
&& &= s\cdot (s^{-1}r\cdot L_{s^{-1},r}(x)T_s(y))\cdot s &&\text{$[Q,X,X]=1$}\\
&& &= s(s^{-1}r)\cdot L_{s^{-1},r}(x)T_s(y)s = r\cdot L_{s^{-1},r}(x)T_s(y)s &&\text{by \eqref{Eq:M}}\\
&& &= r\cdot sT_s^{-1}(L_{s^{-1},r}(x)T_s(y)) = rs\cdot L_{r,s}T_s^{-1}(L_{s^{-1},r}(x)T_s(y)) &&\\
&& &= tz\cdot L_{r,s}T_s^{-1}(L_{s^{-1},r}(x)T_s(y)) &&\text{$t\in T\cap (rs)X$, $z\in X$}\\
&& &= t\cdot zL_{r,s}T_s^{-1}(L_{s^{-1},r}(x)T_s(y)) &&\text{$[Q,X,X]=1$}\\
&& & = t\cdot L_{r,s}T_s^{-1}(L_{s^{-1},r}(x)T_s(y))z &&\text{$X$ is commutative}\\
&& & = t\cdot ( L_{r,s}T_s^{-1}L_{s^{-1},r}(x)\cdot L_{r,s}T_s^{-1}T_s(y)\cdot z). &&
\end{flalign*}
Hence $rx\cdot sy = t\cdot \vhi_{r,s}(x)\psi_{r,s}(y)\theta_{r,s}$, where $\vhi_{r,s}$ is the restriction of $L_{r,s}T_s^{-1}L_{s^{-1},r}$ to $X$, $\psi_{r,s}$ is the restriction of $L_{r,s}T_s^{-1}T_s = L_{r,s}$ to $X$ and $\theta_{r,s}=z\in X$. If $r=1$ then $t$ is the unique element of $T\cap sX$, so $t=s$ and $\theta_{1,s}=z=1$. Similarly, $\theta_{r,1}=1$. We also have $\vhi_{r,1} = L_{r,1}T_1^{-1}L_{1,r}=\id_X$ and $\psi_{1,s} = L_{1,s}=\id_X$.
\end{proof}

We proceed to improve upon Theorem \ref{a2} in the case of $3$-divisible Moufang loops. We start by recalling two results of Gagola.

\begin{prop}[dual of {\cite[Theorem 1]{gagcyc}}]\label{Pr:G1}
Let $Q$ be a Moufang loop. Then
\begin{displaymath}
    u^{3i}x\cdot u^{3j}y =  u^{3(i+j)}T_u^{-i-2j}(T_u^{i-j}(x)T_u^{i-j}(y))
\end{displaymath}
for all $u,x,y\in Q$ and all $i,j\in\mathbb Z$.
\end{prop}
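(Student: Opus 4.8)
The statement is the left/right mirror of \cite[Theorem 1]{gagcyc}, and the plan is to deduce it by passing to the opposite loop. Write $Q^{\mathrm{op}}=(Q,\circ)$, where $x\circ y=y\cdot x$. Since the four identities \eqref{Eq:M} are permuted among themselves when all products are reversed, $Q^{\mathrm{op}}$ is again a Moufang loop. Under this correspondence the left and right translations are interchanged, $L^{\mathrm{op}}_u=R_u$ and $R^{\mathrm{op}}_u=L_u$, so the conjugation maps satisfy $T^{\mathrm{op}}_u=(R^{\mathrm{op}}_u)^{-1}L^{\mathrm{op}}_u=L_u^{-1}R_u=T_u^{-1}$. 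Finally $\langle u\rangle$ is a cyclic, hence abelian, subgroup of both $Q$ and $Q^{\mathrm{op}}$, so $u^k$ denotes the same element in either loop for every $k\in\mathbb Z$.

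With this dictionary in hand I would quote \cite[Theorem 1]{gagcyc}, which gives the analogue of the displayed formula with the powers of $u$ collected on the right (and with $T_u$ in place of $T_u^{-1}$), apply it verbatim inside $Q^{\mathrm{op}}$, and translate the outcome back to $Q$: each occurrence of $\circ$ becomes a reversed $\cdot$, each $T^{\mathrm{op}}_u$ becomes $T_u^{-1}$, and the powers of $u$ are unchanged. After absorbing the substitution $x\leftrightarrow y$, $i\leftrightarrow j$ that the reversal forces on the two factors, one obtains exactly $u^{3i}x\cdot u^{3j}y=u^{3(i+j)}T_u^{-i-2j}(T_u^{i-j}(x)T_u^{i-j}(y))$. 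This is essentially bookkeeping; the only thing that requires care is that conjugation in a Moufang loop is not an automorphism, so the exponents $-i-2j$ and $i-j$ must be tracked literally (in particular through the relation $T^{\mathrm{op}}_u=T_u^{-1}$) rather than manipulated formally.

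If one preferred a self-contained argument, the identity can instead be established by induction on $|i|+|j|$. The case $i=j=0$ is trivial, and for the inductive step one writes $u^{3(i+1)}x=u^{3i}\cdot u^3x$ (legitimate by diassociativity), applies the formula for the parameter pair $(i,j)$ to the element $u^3x$ in place of $x$, simplifies $T_u^{i-j}(u^3x)=u^3T_u^{i-j}(x)$ since $T_u$ fixes powers of $u$, and then recombines using the Moufang identities \eqref{Eq:M}. The obstacle here is once more the failure of $T_u$ to be an automorphism: one cannot simply distribute $T_u^{n}$ over the product $u^3w\cdot z$ that appears, so the step needs an auxiliary identity describing $T_u^{n}(u^3w\cdot z)$, and proving that auxiliary identity — not the induction itself — is where the work lies. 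For that reason the opposite-loop route is the more economical one.
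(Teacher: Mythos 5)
Your opposite-loop argument is exactly what the paper intends: it states the proposition as the ``dual of \cite[Theorem 1]{gagcyc}'' with no further proof, and your dictionary ($Q^{\mathrm{op}}$ is Moufang, $L^{\mathrm{op}}_u=R_u$, $T^{\mathrm{op}}_u=T_u^{-1}$, powers of $u$ unchanged, and the forced swap $x\leftrightarrow y$, $i\leftrightarrow j$) is the correct bookkeeping that justifies that label. This is correct and essentially identical to the paper's approach.
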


We will only need Proposition \ref{Pr:G1} for the case $i=1$ and $j=0$. See \cite{dravojt} for a quick proof of that case.

\begin{prop}[{\cite[Theorem 1]{gag3}}]\label{Pr:G2}
Let $Q$ be a Moufang loop generated by a set of elements, each of which is a cube of an element of $Q$. Then $\inn(Q) = \langle T_u:u\in Q\rangle$.
\end{prop}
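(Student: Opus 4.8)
Recall that the inner mapping group of any loop is $\inn(Q)=\langle T_u,L_{u,v},R_{u,v}:u,v\in Q\rangle$, with $T_u$, $L_{u,v}$, $R_{u,v}$ as in \eqref{ea1}. So, writing $H=\langle T_u:u\in Q\rangle$, it suffices to prove $L_{u,v}\in H$ and $R_{u,v}\in H$ for all $u,v\in Q$. Since the Moufang identities \eqref{Eq:M} form a self-dual set, the opposite loop $Q^{\mathrm{op}}$ is again Moufang; one checks that the $T$-maps of $Q^{\mathrm{op}}$ are the inverses of those of $Q$, so $H$ is unchanged under $Q\mapsto Q^{\mathrm{op}}$, while the maps $R_{v,u}$ of $Q$ are precisely the maps $L_{u,v}$ of $Q^{\mathrm{op}}$. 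Thus it is enough to treat the left inner mappings $L_{u,v}$.

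The engine is \pref{Pr:G1} with $i=1$ and $j=0$, namely $u^3x\cdot y=u^3T_u\m(T_u(x)T_u(y))$ for all $u,x,y\in Q$. Reading both sides as functions of $y$ yields the ``translation cocycle''
\[
L_{u^3x}=L_{u^3}\,T_u\m\,L_{T_u(x)}\,T_u ,
\]
reading them as functions of $x$ yields $L_{u^3}\m R_yL_{u^3}=T_u\m R_{T_u(y)}T_u$, and combining the first identity with $L_uL_v=L_{uv}L_{u,v}$ gives $L_{u^3,v}=T_u\m L_{T_u(v)}\m T_u\,L_v$, which expresses the inner mapping $L_{u^3,v}$ through $T$-maps and the left translations $L_{T_u(v)}$, $L_v$. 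Substituting $x\mapsto T_u\m(x)$ and $y\mapsto T_u\m(y)$ in \pref{Pr:G1} moreover gives $u^3T_u\m(x)\cdot T_u\m(y)=u^3T_u\m(xy)$, i.e.\ $T_u\m$ is a left pseudoautomorphism of $Q$ with companion $u^3$, so $(T_u\m,u^3)\in\lps(Q)$.

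From here I would argue by induction on the length of a word in the generating set $S$ of cubes. For a single generator $s=u^3\in S$ I would use the formula for $L_{u^3,v}$ just derived, together with diassociativity (which makes each $T_w$ carry cubes to cubes), to place $L_{s,v}$ and $L_{v,s}$ into $H$; then, for $u=s_1\cdots s_k$ with $s_i\in S$, I would peel off $s_1$ via $L_{s_1\cdots s_k}=L_{s_1}L_{s_2\cdots s_k}L_{s_1,\,s_2\cdots s_k}\m$ and the translation cocycle applied with the cube $s_1$, reducing the memberships $L_{u,v}\in H$ and $L_{v,u}\in H$ to shorter words and to the one-generator case.

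The step I expect to be the real obstacle is exactly this reduction, because $L_{u,v}$ is not multiplicative in either argument: the induction must simultaneously track the translations $L_{T_u^{i}(x)}$ and the companions produced each time a word is split, so that in effect one is verifying that a family of $\mlt(Q)/H$-valued $2$-cocycles is a coboundary. The cleaner way to organize this — and the route I would ultimately take — is to run the whole argument inside the group $\lps(Q)$ of left pseudoautomorphisms: by the last identity of the previous paragraph the pairs $(T_u\m,u^3)$ lie in $\lps(Q)$, the inner mappings $L_{u,v}$ also occur there with explicit companions, and the hypothesis that $Q$ is generated by cubes becomes the statement that these companions $u^3$ already generate $Q$. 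This should let one bypass the ad hoc bookkeeping, and it uses no finiteness, in keeping with the generality of the cited statement.
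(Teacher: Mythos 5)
First, a point of comparison: the paper does not prove this statement at all --- it is quoted from Gagola \cite{gag3} (Theorem 1 there), so there is no internal proof to measure yours against. Your preliminary reductions are correct as far as they go: the passage to the opposite loop to dispose of the $R_{u,v}$, the translation identity $L_{u^3x}=L_{u^3}T_u^{-1}L_{T_u(x)}T_u$ read off from Proposition~\ref{Pr:G1} with $i=1$, $j=0$, the consequence $L_{u^3,v}=T_u^{-1}L_{T_u(v)}^{-1}T_uL_v$, and the fact that $(u^{3},T_u^{-1})\in\lps(Q)$ (the paper's convention puts the companion first) all check out.

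The problem is that the proof stops exactly where the work begins, and you say so yourself. Two genuine gaps remain. (1) Even the one-generator base case is not established: $T_u^{-1}L_{T_u(v)}^{-1}T_uL_v$ expresses $L_{u^3,v}$ through left \emph{translations}, not through $T$-maps, and no argument is given that this element lies in $H=\langle T_w:w\in Q\rangle$. The natural next move --- conjugating $L_v$ through $T_u$ using the companion $u^{-3}$ --- only yields $L_{u^3,v}=T_u^{-1}L_{u^{-3},\,T_u(v)}^{-1}T_u$, which trades the unknown membership $L_{u^3,v}\in H$ for the equally unknown $L_{u^{-3},\,T_u(v)}\in H$; so some further identity is genuinely needed here. (2) The induction on word length is described only in the conditional (``I would peel off\dots'', ``should let one bypass the ad hoc bookkeeping''); the non-multiplicativity of $L_{u,v}$ in its arguments, which you correctly identify as the obstacle, is never actually overcome. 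Reformulating inside $\lps(Q)$ is a reasonable organizing idea (and close in spirit to how such results are proved), but as stated it is a change of bookkeeping rather than an argument: the same cocycle problem reappears there as the question of which pairs $(c,\vhi)$ with $\vhi\in\inn(Q)$ are generated by the pairs $(u^{-3},T_u)$ together with $\mlt_Q$-data, and that is precisely what has to be computed. As it stands this is a plausible plan with correct auxiliary lemmas, not a proof.
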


\begin{thm}\label{a6}
Let $X$ be a normal subloop of a $3$-divisible Moufang loop $Q$. Then $X$ induces an abelian congruence of $Q$ if and only if $u\cdot xy =uy\cdot x$ for all $u\in Q$ and $x,y\in X$.
\end{thm}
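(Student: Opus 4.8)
The strategy is to reduce Theorem \ref{a6} to Theorem \ref{a2}, which says that $X$ induces an abelian congruence if and only if (i) every inner mapping of $Q$ restricts to an automorphism of $X$, and (ii) $u\cdot xy = uy\cdot x$ for all $u\in Q$, $x,y\in X$. Since the hypothesis of Theorem \ref{a6} is precisely condition (ii), and the direction ``(i) and (ii) $\Rightarrow$ abelian congruence'' is already available, the only thing to prove is that in a $3$-divisible Moufang loop, condition (ii) implies condition (i). (The converse direction, that an abelian congruence forces (ii), is immediate from Theorem \ref{a2}.) So the whole content is the implication: if $u\cdot xy = uy\cdot x$ for all $u\in Q$ and $x,y\in X$, and $Q$ is a $3$-divisible Moufang loop, then every inner mapping of $Q$ restricts to an automorphism of $X$.

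First I would verify that condition (ii) makes $X$ into a commutative group (taking $u=1$ gives commutativity and, with a standard argument, associativity), so $X$ is genuinely a normal commutative subgroup. Next, the key reduction: by Proposition \ref{Pr:G2}, since $Q$ is $3$-divisible it is generated by cubes, so $\inn(Q) = \langle T_u : u\in Q\rangle$. Hence it suffices to show that each $T_u$ restricts to an automorphism of $X$. That $T_u$ restricts to a bijection of $X$ onto itself follows from normality of $X$ (inner mappings always permute $X$). The real task is multiplicativity: $T_u(xy) = T_u(x)T_u(y)$ for all $x,y\in X$. Here I would invoke Proposition \ref{Pr:G1} in the case $i=1$, $j=0$: writing an arbitrary element of $Q$ as $u^3$ (using $3$-divisibility, replacing $u$ by a cube root), the identity becomes
\begin{displaymath}
u^{3}x\cdot y = u^{3}T_u^{-1}(T_u(x)T_u(y))
\end{displaymath}
for all $x,y\in Q$. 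Restricting $x,y$ to $X$ and using condition (ii) on the left-hand side (with the element $u^3\in Q$) to rewrite $u^3 x\cdot y$, I get a relation that, after cancelling the left translation by $u^3$ and a power of $T_u$, should collapse exactly to $T_u(xy) = T_u(x)T_u(y)$.

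Let me be a little more careful about the last step, since that is where the bookkeeping lives and where the main obstacle is. Condition (ii) says $u'\cdot xy = (u'y)\cdot x$; I want to feed in $u' = u^3 T_u^{?}(\cdot)$ appropriately, or rather apply (ii) to pull the product $xy$ apart. The cleanest route: apply Proposition \ref{Pr:G1} with the roles arranged so that the left-hand side reads $u^3 \cdot xy$ (i.e. choose the grouping so Gagola's formula expresses $u^3\cdot(xy)$ in terms of $T_u$), and independently apply condition (ii) to $u^3\cdot xy$. Actually it may be smoother to run Gagola's identity to compute $(u^3 x)\cdot y$ and $(u^3 y)\cdot x$, subtract using (ii) (which equates these two), and extract $T_u(x)T_u(y) = T_u(xy)$ after noting $T_u$ fixes $1$ and is injective on $X$. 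The expected obstacle is purely the manipulation of nested $T_u$-powers: Proposition \ref{Pr:G1} with $i=1,j=0$ yields $T_u^{-1}(T_u(x)T_u(y))$, and I need to confirm that combining this with (ii) gives a clean cancellation rather than leaving residual conjugations; this requires using that $T_u$ restricts to a bijection of $X$ and that $T_u(1)=1$, plus possibly one more application of (ii) to move a factor past a product inside $X$. Once multiplicativity of each $T_u$ on $X$ is established, Proposition \ref{Pr:G2} closes the argument and Theorem \ref{a2} delivers the abelian congruence.
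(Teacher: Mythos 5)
Your proposal is correct and follows essentially the same route as the paper: reduce to Theorem \ref{a2}, invoke Proposition \ref{Pr:G2} to restrict attention to the maps $T_u$, and apply Proposition \ref{Pr:G1} with $i=1$, $j=0$ together with the hypothesis to get $u^3\cdot xy = u^3T_u^{-1}(T_u(x)T_u(y))$ and hence $T_u(xy)=T_u(x)T_u(y)$. The cancellation you worry about is clean (no residual conjugations), and note that no cube root of $u$ is needed for this step --- $3$-divisibility enters only through Proposition \ref{Pr:G2}.
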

\begin{proof}
The direct implication follows from Theorem \ref{a2}. For the converse implication, suppose that $u\cdot xy =uy\cdot x$ for all $u\in Q$ and $x,y\in X$ so that $X$ is a commutative group. Since $Q$ is $3$-divisible, every element of $Q$ is a cube of some element of $Q$ and Proposition \ref{Pr:G2} therefore applies. In view of Proposition \ref{Pr:G2} and Theorem \ref{a2}, it suffices to show that $T_u$ restricts to an automorphism of $X$ for every $u\in Q$. Let $x,y\in X$. With $i=1$ and $j=0$, the formula of Proposition \ref{Pr:G1} becomes $u^3x\cdot y = u^3T_u^{-1}(T_u(x)T_u(y))$. Since $u^3x\cdot y = u^3\cdot yx = u^3\cdot xy$ is assumed, we have $u^3\cdot xy = u^3T_u^{-1}(T_u(x)T_u(y))$, $xy = T_u^{-1}(T_u(x)T_u(y))$ and $T_u(xy) = T_u(x)T_u(y)$.
\end{proof}

\section{A subnormal series in the multiplication group}\label{d}

If $S\le Q$ and $Q$ is a loop then $\mlt_Q(S)= \langle L_s,R_s:s\in S\rangle\le\mlt(Q)$ is the \emph{relative multiplication group} of $S$ in $Q$.

If $S\unlhd Q$ then $\mlt(Q/S)$ is an image of $\mlt(Q)$ under the homomorphism determined by $L_x\mapsto L_{xS}$ and $R_x\mapsto R_{xS}$. Let $\mathcal C(Q,S)$ be the kernel of this homomorphism. It is not hard to show that $\mathcal C(Q,S)$ consists of all $\vhi\in\mlt(Q)$ that \emph{centralize the cosets} of $S$ in $Q$, that is, $\varphi(uS) = uS$ for all $u\in Q$.

\begin{lem}\label{Lm:Decomp}
Let $S$ be a normal subloop of $Q$. Then
\begin{displaymath}
	\mathcal C(Q,S) = \{L_s\sigma,R_s\sigma:s\in S,\,\sigma\in\inn(Q)\cap\mathcal C(Q,S)\}.
\end{displaymath}
\end{lem}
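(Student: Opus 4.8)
The plan is to prove the two inclusions of the claimed equality separately; the right-to-left inclusion is routine and the left-to-right inclusion carries the content.

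For ``$\supseteq$'', recall that $\mathcal C(Q,S)$ is by definition the kernel of the homomorphism $\mlt(Q)\to\mlt(Q/S)$ sending $L_x\mapsto L_{xS}$ and $R_x\mapsto R_{xS}$, and is therefore a subgroup of $\mlt(Q)$. For $s\in S$ we have $sS=S$, the identity element of $Q/S$, so $L_{sS}$ and $R_{sS}$ are the identity map and hence $L_s,R_s\in\mathcal C(Q,S)$. Since $\inn(Q)\cap\mathcal C(Q,S)\subseteq\mathcal C(Q,S)$ and a subgroup is closed under products, every $L_s\sigma$ and every $R_s\sigma$ with $s\in S$ and $\sigma\in\inn(Q)\cap\mathcal C(Q,S)$ lies in $\mathcal C(Q,S)$.

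For ``$\subseteq$'' I would use the coset description of $\mathcal C(Q,S)$ recalled just before the lemma. Let $\varphi\in\mathcal C(Q,S)$. Applying $\varphi(uS)=uS$ with $u=1$ gives $\varphi(S)=S$, so $s:=\varphi(1)\in S$. Set $\sigma:=L_s^{-1}\varphi$. Then $\sigma(1)=L_s^{-1}(\varphi(1))=L_s^{-1}(s)=s\ldiv s=1$, so $\sigma$ lies in $\inn(Q)$ because $\inn(Q)$ is precisely the stabilizer of $1$ in $\mlt(Q)$; moreover $\sigma=L_s^{-1}\varphi$ is a product of two elements of the subgroup $\mathcal C(Q,S)$ (using $L_s\in\mathcal C(Q,S)$ from the first inclusion), so $\sigma\in\inn(Q)\cap\mathcal C(Q,S)$. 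Thus $\varphi=L_s\sigma$ lies in the right-hand set. The same argument with $R_s$ in place of $L_s$---legitimate since $R_s(1)=s$ too and $R_s\in\mathcal C(Q,S)$---gives the companion form $\varphi=R_s\sigma'$ with $\sigma'=R_s^{-1}\varphi\in\inn(Q)\cap\mathcal C(Q,S)$, matching the way the statement is phrased.

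I do not anticipate a real obstacle: the whole proof is the one-line observation that $\sigma:=L_s^{-1}\varphi$ (with $s=\varphi(1)$) fixes $1$ and stays inside $\mathcal C(Q,S)$. The two facts that must be pointed out explicitly are that $L_s,R_s\in\mathcal C(Q,S)$ whenever $s\in S$ (immediate from the kernel description, since then $sS$ is the identity coset) and that $\inn(Q)$ is the point stabilizer of $1$ in $\mlt(Q)$ (the definition recalled earlier). In short, the lemma says that each element of $\mathcal C(Q,S)$ factors as a translation by an element of $S$ followed by an inner mapping that itself centralizes the cosets of $S$.
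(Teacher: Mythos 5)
Your proof is correct and follows essentially the same route as the paper: both decompose $\varphi\in\mathcal C(Q,S)$ as $L_s\sigma$ with $s=\varphi(1)\in S$ (using $\varphi(S)=S$) and $\sigma=L_s^{-1}\varphi$ fixing $1$, and both verify the reverse inclusion from $L_s,R_s\in\mathcal C(Q,S)$. The only cosmetic difference is that you invoke the subgroup/kernel property of $\mathcal C(Q,S)$ where the paper checks $\sigma(uS)=uS$ and $L_s\sigma(uS)=uS$ by direct computation with cosets; both are valid.
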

\begin{proof}
Given $\vhi\in\mlt(Q)$, there are uniquely determined $v\in Q$ and $\sigma\in\inn(Q)$ such that $\vhi=L_v\sigma$. If $\sigma\in\inn(Q)\cap\mathcal C(Q,S)$ and $s\in S$ then $L_s\sigma(uS) = L_s(uS) = L_s(Su) = s(Su) = (sS)u = Su = uS$ for all $u\in Q$, and therefore $\sigma\in\inn(Q)\cap\mathcal C(Q,S)$ and $L_s\sigma\in\mathcal C(Q,S)$. Conversely, let $\vhi=L_v\sigma\in\mathcal C(Q,S)$. Since $S=\vhi(S)= L_v\sigma(S) = L_v(S) = vS$, it follows that $v\in S$. Moreover, $\sigma(uS) = L_v^{-1}L_v\sigma(uS) = L_v^{-1}(uS) = uS$ since $v\in S$. The argument for $R_s\sigma$ is similar.
\end{proof}

In particular, $\mlt_Q(S)=\langle L_s,R_s:s\in S\rangle \le\mathcal C(Q,S)\unlhd \mlt(Q)$. In this section we will refine this series into a subnormal series in the case of $3$-divisible Moufang loops.

For a loop $Q$, define the \emph{nucleus} $\nuc(Q)$ and the \emph{left nucleus} $\lnuc(Q)$ by
\begin{align*}
    \nuc(Q)&=\{a\in Q:a(uv)=(au)v,\,u(av)=(ua)v,\,u(va)=(uv)a\text{ for all }u,v\in Q\},\\
    \lnuc(Q) &= \{a\in Q:a(uv)=(au)v\text{ for all }u,v\in Q\}.
\end{align*}
Bruck proved in \cite{Bruck} that the nuclei are subgroups of $Q$, and if $Q$ is Moufang then $\nuc(Q)=\lnuc(Q)$ is a normal subloop of $Q$.

A (\emph{left}) \emph{\pseu} $\vhi$ of $Q$ is a permutation of $Q$ for which there exists some $c\in Q$ such that
\begin{displaymath}
	c\vhi(x) \cdot \vhi(y) = c\vhi(xy)
\end{displaymath}
for all $x,y \in Q$. The element $c$ is called a (\emph{left}) \emph{companion} of $\vhi$. Then $d\in Q$ is another companion of $\vhi$ if and only if $d/c\in\lnuc(Q)$.

Denote by $\lps(Q)$ the set of all pairs $(c,\vhi)$ such that $c$ is a left companion of a pseudoautomorphism $\vhi$ of $Q$. Then $\lps(Q)$ is a group with operations
\begin{equation}\label{ed1}
    (c,\vhi)(d,\psi) = (c\vhi(d),\vhi\psi) \text{\, and \,}
    (c,\vhi)\m = (\vhi\m(c\m),\vhi\m).
\end{equation}

Pseudoautomorphisms may be interpreted by means of autotopisms. A triple $(\alpha,\beta,\gamma)$ of permutations of $Q$ is an \emph{autotopism} of $Q$ if
\begin{equation}\label{Eq:Atp}
    \alpha(x)\beta(y) = \gamma(xy)
\end{equation}
for all $x,y \in Q$. Clearly,
\begin{equation}\label{ed2}
    (c,\vhi) \in \lps(Q)\quad \Leftrightarrow \quad (L_c\vhi,\vhi,L_c\vhi) \in \atp(Q).
\end{equation}
The set of all autotopisms of $Q$ forms the \emph{autotopism group} $\atp(Q)$ under componentwise composition. If $(\alpha,\beta,\gamma)\in \atp(Q)$ and $\beta(1) =1$, then $\alpha = \gamma$ and $(\alpha(1),\beta) = (\gamma(1),\beta)\in \lps(Q)$. This can be easily seen by setting first $y=1$ and then $x=1$ in \eqref{Eq:Atp}.

Let us now summarize basic facts pertaining to autotopisms and \pseu s in Moufang loops, cf.~\cite{Bruck}.

Let $Q$ be a Moufang loop. Every inner mapping of $Q$ is a \pseu{}. This can be seen by verifying the well-known identities
\begin{equation}\label{ed4}
    \begin{gathered}
    \text{$(x^{-3},T_x)\in \lps(Q)$, $([x\m,y],[L_x,R_y])\in \lps(Q)$,}\\
\text{$L_{x,y} = [L_x,R_y\m]=[R_x\m,L_y]\,$ and $R_{x,y} = [L_y\m,R_x] =
[R_y,L_x\m]$.}
\end{gathered}
\end{equation}

Let $M_x = L_xR_x$. Since $Q$ is Moufang, $M_x$ is also equal to $R_xL_x$. If $x\in Q$ then $\atp(Q)$ contains each of the triples
\begin{equation}\label{ed3}
(L_x,R_x,M_x),\ (M_x,L_x\m,L_x) \text{\, and } (R_x\m, M_x, R_x).
\end{equation}

A \emph{semiautomorphism} of a Moufang loop $Q$ is a bijection $\vhi$ of $Q$ satisfying $\vhi(1)=1$ and $\vhi(xyx)=\vhi(x)\vhi(y)\vhi(x)$ for all $x,y\in Q$. Every semiautomorphism satisfies $\vhi(x^i)=\vhi(x)^i$ for all $x\in Q$ and $i\in\mathbb Z$. Every pseudoautomorphism of a Moufang loop is a semiautomorphism. In particular, $\vhi(x^{-1}) = x^{-1}$ if $\vhi\in\inn(Q)$.

\begin{lem}\label{d1}
Let $Q$ be a Moufang loop. Suppose that $x\in Q$ and $(c,\vhi)\in \lps(Q)$. Then
\begin{displaymath}
    (c^{\vhi(x\m)} ,L_{\vhi(x)}\m \vhi L_x)\in \lps(Q),
\end{displaymath}
where $c^y = y^{-1}cy$.
\end{lem}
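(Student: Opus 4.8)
The plan is to translate the statement into the language of autotopisms via the dictionary \eqref{ed2}, conjugate by a suitable translation, and then read off the resulting pseudoautomorphism. By \eqref{ed2}, the hypothesis $(c,\vhi)\in\lps(Q)$ is equivalent to $(L_c\vhi,\vhi,L_c\vhi)\in\atp(Q)$. I would like to conjugate this autotopism by the Moufang autotopism $(L_x,R_x,M_x)$ from \eqref{ed3}, or rather a combination of the triples listed there, so as to produce a new autotopism whose middle component is $L_{\vhi(x)}\m\vhi L_x$; since this middle component fixes $1$ (because $L_x(1)=x$, $\vhi(x)=\vhi(x)$, and $L_{\vhi(x)}\m(\vhi(x))=1$), the remark following \eqref{Eq:Atp} will immediately give that its value at $1$ together with the middle component forms an element of $\lps(Q)$, and that value is exactly the claimed companion.

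Concretely, the first step is to compute $\atp(Q)\ni(L_x,R_x,M_x)\m\,(L_c\vhi,\vhi,L_c\vhi)\,(L_?,R_?,M_?)$ for an appropriate second Moufang triple, choosing the factors so that the middle entry collapses to $R_x\m\,\vhi\,$ (something)$\,$; the natural choice is to use $(L_x,R_x,M_x)$ on the right as well after adjusting, exploiting that in a Moufang loop $R_x\m\vhi R_{x'}$-type expressions can be rewritten using $\vhi$ being a semiautomorphism. Here is where I expect to lean on the fact, recorded just before the lemma, that every pseudoautomorphism of a Moufang loop is a semiautomorphism, hence $\vhi(x^i)=\vhi(x)^i$; this is what will let me identify $\vhi R_x$-translates with $R_{\vhi(x)}$-translates up to inner corrections. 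The second step is to simplify the outer components of the conjugated autotopism using the Moufang identities \eqref{Eq:M} and the relations $M_x=L_xR_x=R_xL_x$, until the triple has the shape $(\alpha,\beta,\alpha)$ with $\beta(1)=1$; the third step is to evaluate $\alpha(1)$ and check it equals $c^{\vhi(x\m)}=\vhi(x)\,c\,\vhi(x)\m$ — note the exponent convention $c^y=y\m cy$ in the statement, so $c^{\vhi(x\m)}=\vhi(x\m)\m c\,\vhi(x\m)=\vhi(x)c\,\vhi(x)\m$, using $\vhi(x\m)=\vhi(x)\m$, which holds for semiautomorphisms.

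The main obstacle will be the bookkeeping in the middle step: getting the conjugate of $L_c\vhi$ by the Moufang triples to simplify cleanly to $L_{\vhi(x)}\m\vhi L_x$ in the middle slot requires choosing the right pair of triples from \eqref{ed3} (one on each side) and then repeatedly invoking the four Moufang laws together with the semiautomorphism property to move $\vhi$ past translations. An alternative, possibly cleaner route avoiding autotopisms altogether: start from $c\,\vhi(a)\cdot\vhi(b)=c\,\vhi(ab)$, substitute $a=xp$ (so that $\vhi(a)=\vhi(xp)$) and $b=qx\m$ or similar, and directly verify the defining identity $c'\,\psi(p)\cdot\psi(q)=c'\,\psi(pq)$ for $\psi=L_{\vhi(x)}\m\vhi L_x$ and $c'=c^{\vhi(x\m)}$ by brute Moufang manipulation; this sidesteps the dictionary but trades it for a longer direct computation. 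I would attempt the autotopism route first and fall back to the direct verification if the component-chasing does not close up quickly. Either way, the only loop-theoretic inputs needed are the Moufang identities \eqref{Eq:M}, the triples \eqref{ed3}, the group structure \eqref{ed1} on $\lps(Q)$, and the semiautomorphism property of $\vhi$.
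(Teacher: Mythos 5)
Your plan is exactly the paper's proof: the paper forms the product $(M_{\vhi(x)},L_{\vhi(x)}\m,L_{\vhi(x)})(L_c\vhi,\vhi,L_c\vhi)(M_x\m,L_x,L_x\m)$ of autotopisms, whose middle component is $\psi=L_{\vhi(x)}\m\vhi L_x$ with $\psi(1)=1$, and then reads off the companion $L_{\vhi(x)}L_c\vhi L_x\m(1)=\vhi(x)c\vhi(x\m)=c^{\vhi(x\m)}$ via the remark following \eqref{Eq:Atp}. The one detail you flagged as the main obstacle resolves itself cleanly: the right choice from \eqref{ed3} is the triple $(M_x,L_x\m,L_x)$ taken at $\vhi(x)$ on the left and its inverse taken at $x$ on the right, after which no Moufang-identity manipulation or semiautomorphism argument is needed except $\vhi(x\m)=\vhi(x)\m$ to match the stated form of the companion.
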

\begin{proof}
Let $\psi = L_{\vhi(x)}\m \vhi L_x$ and note that $\psi(1) = 1$. By \eqref{ed2} and \eref{d3} the composition
\begin{displaymath}
    (M_{\vhi(x)},L_{\vhi(x)}\m,L_{\vhi(x)})(L_c\vhi,\vhi,L_c\vhi)(M_x\m,L_x,L_x\m)
\end{displaymath}
is an autotopisms of $Q$. Hence a companion of $\psi = L_{\vhi(x)}\m\vhi L_x$ is equal to $L_{\vhi(x)}L_c\vhi L_x\m(1) = \vhi(x)c\vhi(x\m) = \vhi(x^{-1})^{-1}c\vhi(x^{-1})$.
\end{proof}

The following result will be useful in the inductive proof of Theorem \ref{Th:Equivalent}.

\begin{lem}\label{Lm:ElAb}
Let $A$ be a finite commutative normal subgroup of a Moufang loop $Q$. Let $p$ be a prime dividing $|A|$. Then there is a nontrivial normal $p$-subgroup of $A$ that is normal in $Q$.
\end{lem}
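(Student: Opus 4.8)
The plan is to let $B$ be the (unique) Sylow $p$-subgroup of the finite commutative group $A$, i.e., $B=\{a\in A: a^{p^k}=1\}$, where $p^k$ denotes the $p$-part of $|A|$. Since $A$ is commutative, $B$ is a subgroup of $A$ and hence an associative subloop of $Q$; every element of $B$ has order dividing $p^k$, so $B$ is a $p$-group; and $B\neq 1$ because $p\mid|A|$ (Cauchy's theorem inside the abelian group $A$). Thus the entire content of the lemma reduces to showing that $B$ is normal in $Q$, and for that it is enough to verify that $\vhi(B)=B$ for every $\vhi\in\inn(Q)$, since, as is well known, a subloop of a loop is normal precisely when it is invariant under all inner mappings.

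To establish this invariance I would fix $\vhi\in\inn(Q)$ and use the facts collected earlier in this section: because $Q$ is Moufang, every inner mapping is a \pseu{} (cf.~\eref{d4}), hence a \semi{}, so that $\vhi(1)=1$ and $\vhi(a^i)=\vhi(a)^i$ for all $a\in Q$ and $i\in\zbb$. Now take $a\in B$. Normality of $A$ gives $\vhi(a)\in\vhi(A)=A$, while $\vhi(a)^{p^k}=\vhi(a^{p^k})=\vhi(1)=1$ shows that the order of $\vhi(a)$ is a power of $p$; therefore $\vhi(a)\in B$. Hence $\vhi(B)\subseteq B$, and since $\vhi$ restricts to an injection of the finite set $B$ into itself, we get $\vhi(B)=B$, which finishes the proof.

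The point deserving attention — and essentially the only reason the statement is not immediate — is that the restriction of an inner mapping of $Q$ to the normal subgroup $A$ need not be an automorphism of $A$ (the failure of this is exactly what the first hypothesis of Theorem~\ref{a2} rules out), so one cannot simply argue that characteristic subgroups of $A$ are normal in $Q$. What makes the argument go through is the weaker fact that inner mappings of a Moufang loop are semiautomorphisms: this already suffices to preserve the order of every element, hence to preserve the $p$-primary part $B$ of $A$. The same reasoning would show that $\{a\in A: a^p=1\}$ is normal in $Q$ as well, but we will not need such refinements.
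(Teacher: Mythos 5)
Your proof is correct and follows essentially the same route as the paper: both take the $p$-primary component of $A$ and show it is preserved by every inner mapping of $Q$ using the fact that inner mappings of a Moufang loop are pseudoautomorphisms, hence semiautomorphisms, and therefore satisfy $\vhi(x^i)=\vhi(x)^i$. Your closing remark correctly identifies the one subtlety, namely that one cannot simply invoke ``characteristic in $A$ implies normal in $Q$'' here.
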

\begin{proof}
Let $S$ be the $p$-primary component of the commutative group $A$. Let $\vhi$ be an inner mapping of $Q$. Since $A\unlhd Q$, $\vhi(A)=A$. As $\vhi$ is a semiautomorphism of $Q$, it restricts to a semiautomorphism of $A$. Recall that $\vhi(x^i)=\vhi(x)^i$ for all $x\in Q$ and $i\in\mathbb Z$. In particular, if $x\in S$ then $|\vhi(x)|$ divides $|x|$ and therefore $\vhi(x)\in S$.
\end{proof}

Let $S$ be a normal subloop of a Moufang loop $Q$. Recall the normal subgroup $\mathcal C(Q,S)$ of $\mlt(Q)$ and define $\mathcal C_0(Q,S)$ as the set of all $\vhi\in\mathcal C(Q,S)$ such that when $\vhi$ is written as $\vhi=L_s\sigma$ with $s\in S$ and $\sigma\in\inn(Q)$ (cf.~Lemma \ref{Lm:Decomp}) then the  pseudoautomorphism $\sigma$ has a companion in $S$.

\begin{prop}\label{d2}
Let $S$ be a normal subloop of a Moufang loop $Q$. Then $\mathcal C_0(Q,S)$ is a normal subgroup of $\mathcal C(Q,S)$.
\end{prop}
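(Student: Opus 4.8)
The plan is to verify that $\mathcal C_0(Q,S)$ is a subgroup of $\mathcal C(Q,S)$ closed under conjugation by arbitrary elements of $\mathcal C(Q,S)$ (in fact, we will aim for closure under conjugation by all of $\mlt(Q)$, which is even better and costs no extra work). Throughout we use the decomposition from Lemma~\ref{Lm:Decomp}: every element of $\mathcal C(Q,S)$ has the form $L_s\sigma$ with $s\in S$ and $\sigma\in\inn(Q)\cap\mathcal C(Q,S)$, and by definition $L_s\sigma\in\mathcal C_0(Q,S)$ exactly when $\sigma$ (as a pseudoautomorphism) admits a companion lying in $S$. The key technical tool is the multiplication in $\lps(Q)$ recorded in \eqref{ed1}, together with the observation that $\mlt(Q)$ embeds into $\lps(Q)$ via $L_x\mapsto(1,L_x)$ composed with whatever tracks companions, and more precisely that for $\sigma\in\inn(Q)$ we may speak of ``a companion of $\sigma$'' via the pairs $(c,\sigma)\in\lps(Q)$ guaranteed by \eqref{ed4} and the product formula.

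First I would show $\mathcal C_0(Q,S)$ is closed under products. Take $L_s\sigma$ and $L_t\tau$ in $\mathcal C_0(Q,S)$, so $\sigma$ has a companion $c\in S$ and $\tau$ has a companion $d\in S$. Their product in $\mlt(Q)$ is $L_s\sigma L_t\tau$; I need to rewrite this as $L_{s'}\sigma'$ with $s'\in S$ and $\sigma'$ a pseudoautomorphism with companion in $S$. Here $\sigma L_t = L_{\sigma(t)}\sigma'$ for a suitable inner mapping $\sigma'$; since $S\unlhd Q$ and $\sigma$ is a semiautomorphism restricting to $S$ (as in the proof of Lemma~\ref{Lm:ElAb}), $\sigma(t)\in S$, and $\sigma'$ is conjugate-ish to $\sigma$ in a way controlled by Lemma~\ref{d1}. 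Tracking companions, $(c,\sigma)(d,\tau)=(c\,\sigma(d),\sigma\tau)\in\lps(Q)$ has companion $c\,\sigma(d)\in S$ (product of two elements of $S$, since $S$ is a subloop and $\sigma(d)\in S$). Because composing with left translations $L_s,L_t$ by elements of $S$ only changes companions by elements of $\lnuc(Q)$ — or more carefully, because $(s,L_s)$-type factors contribute companions in $S$ — one concludes the resulting $\sigma'$ has a companion in $S$. For inverses: $(L_s\sigma)\m=\sigma\m L_s\m=L_{\sigma\m(s\m)}\sigma''$, and $\sigma\m(s\m)\in S$ since $\sigma$ restricts to a permutation of $S$; the companion of $\sigma\m$, by the inverse formula in \eqref{ed1}, is $\sigma\m(c\m)\in S$, and again the left-translation adjustment keeps us inside $S$.

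The main obstacle — and the step I would write most carefully — is the bookkeeping of companions under the rewriting $\sigma L_t\mapsto L_{\sigma(t)}\,(L_{\sigma(t)}\m\sigma L_t)$: one must check that if $\sigma$ has a companion in $S$ then so does $L_{\sigma(t)}\m\sigma L_t$, for $t\in S$. This is exactly the content of Lemma~\ref{d1} with $x=t$: a companion of $L_{\sigma(t)}\m\sigma L_t$ is $c^{\sigma(t\m)}=\sigma(t\m)\m c\,\sigma(t\m)$ (conjugation is by $\sigma(t\m)\in S$, hence stays in $S$ since $S$ is normal and closed under conjugation by its own elements). So the ``companion in $S$'' condition is stable under the exact manipulations needed to renormalize an arbitrary product into Lemma~\ref{Lm:Decomp} form. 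Once products and inverses are handled, normality in $\mathcal C(Q,S)$ follows the same way: conjugating $L_s\sigma$ by $L_t\tau\in\mathcal C(Q,S)$ produces, after renormalization via Lemma~\ref{d1} and the $\lps(Q)$-arithmetic, an element whose $\inn(Q)$-part has companion obtained from $c\in S$ by $\sigma(d)$-translations and $\tau(\cdot)$-conjugations, all of which preserve membership in $S$. I would therefore structure the proof as three short verifications (closure under multiplication, under inversion, under conjugation), each reduced to Lemma~\ref{d1} plus the product and inverse formulas \eqref{ed1}, with the single genuinely delicate point being the companion-tracking of Lemma~\ref{d1}.
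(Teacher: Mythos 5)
Your closure-under-products and closure-under-inverses steps are essentially sound: when both factors lie in $\mathcal C_0(Q,S)$, every ingredient of the renormalized companion --- the commutator $[s\m,\sigma(t\m)]$ coming from $L_{s,\sigma(t)}$ via \eqref{ed4}, the conjugate $c^{\sigma(t\m)}$ from Lemma~\ref{d1}, and the translation by $\sigma(t\m)\sigma(td)$ --- does stay inside $S$, and this is the same bookkeeping the paper carries out. The genuine gap is in the normality step. There you conjugate $L_s\sigma\in\mathcal C_0(Q,S)$ by an \emph{arbitrary} $L_t\tau\in\mathcal C(Q,S)$, and the companion $d$ of $\tau$ need not lie in $S$ (nor in $\nuc(Q)S$). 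Your justification --- that the resulting companion is obtained from $c\in S$ by ``$\sigma(d)$-translations and $\tau(\cdot)$-conjugations, all of which preserve membership in $S$'' --- is false as stated: translating an element of $S$ by $\sigma(d)$ with $d\notin S$ leaves $S$. What actually saves the argument is a cancellation, not an invariance: the $\lps(Q)$-arithmetic \eqref{ed1} produces a companion of the shape $\bigl(\tau\m(d\m)\,\tau\m(c)\bigr)\cdot\tau\m(\sigma(d))$ (up to the $S$-valued correction terms), and one must use that $\sigma$ and $\tau$ \emph{centralize the cosets of} $S$ to see that $\tau\m(d\m)$ and $\tau\m(\sigma(d))$ are congruent to $d\m$ and $d$ modulo $S$, so that their contributions cancel modulo $\nuc(Q)S$. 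Without invoking this coset-centralizing property of $\mathcal C(Q,S)$ your conjugation step does not go through; and your parenthetical claim that normality in all of $\mlt(Q)$ ``costs no extra work'' is even further from being justified, since a general element of $\mlt(Q)$ decomposes as $L_v\sigma$ with $v\notin S$.

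For comparison, the paper avoids the conjugation computation entirely: it defines a map $f:\mathcal C(Q,S)\to Q/(\nuc(Q)S)$, $L_s\sigma\mapsto c\,(\nuc(Q)S)$ where $c$ is a companion of $\sigma$, checks it is well defined (companions are determined up to $\nuc(Q)$) and multiplicative (this is exactly your product computation, pushed one step further to conclude $e\equiv cd$ modulo $\nuc(Q)S$ using that $\sigma$ centralizes cosets of $S$ and $L_s$ is trivial modulo $S$), and identifies $\mathcal C_0(Q,S)$ as the kernel; normality is then automatic. If you repair your conjugation step along the lines above you will in effect have reproved the multiplicativity of $f$ for mixed pairs, so you may as well package the argument as the paper does.
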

\begin{proof}
Let $\vhi = L_s\sigma\in\mathcal C(Q,S)$ with $s\in S$ and $\sigma\in\inn(Q)$, and let $c\in Q$ be a companion of $\sigma$. We show that the mapping
\begin{displaymath}
    f:\mathcal C(Q,S)\to Q/(\nuc(Q)S),\quad L_s\sigma\mapsto c (\nuc(Q)S)
\end{displaymath}
is a well-defined homomorphism with kernel $\mathcal C_0(Q,S)$.

Since since both $S$ and $\nuc(Q)$ are normal in $Q$, they generate the normal subloop $\nuc(Q)S\unlhd Q$. If $c$ and $d$ are companions of $\sigma$ then $d\in c\nuc(Q)\subseteq c\nuc(Q)S$, so $f$ is well-defined. For the homomorphic property, consider $L_s\sigma$, $L_t\tau\in\mathcal C(Q,S)$ (with $s,t\in S$ and $\sigma,\tau\in\inn(Q)$) such that $(c,\sigma)$, $(d,\tau)\in\lps(Q)$ for some $c,d\in Q$. We have $f(L_s\sigma)f(L_t\tau) = c\nuc(Q)S\cdot d\nuc(Q)S = (cd)\nuc(Q)S$. Let
\begin{displaymath}
    \psi = L_{s\sigma(t)}\m L_s\sigma L_t\tau = L_{s\sigma(t)}\m L_s L_{\sigma(t)} L_{\sigma(t)}\m \sigma L_t \tau
\end{displaymath}
and observe that $\psi(1) = 1$, so $\psi\in\inn(Q)$. To compute a companion of $\psi$, note that $[s\m,\sigma(t\m)]$ is a companion of $L_{s\sigma(t)}\m L_s L_{\sigma(t)}=L_{s,\sigma(t)} = [L_s,R_{\sigma(t)}\m]$, by \eref{d4}, and that $c^{\sigma(t\m)}$ is a companion of $L_{\sigma(t)}\m \sigma L_t$, by \lref{d1}. Using \eref{d1}, $L_{\sigma(t)}^{-1}\sigma L_t\tau$ has a companion $c^{\sigma(t^{-1})}\cdot L_{\sigma(t)}^{-1}\sigma L_t(d) = c^{\sigma(t^{-1})}\cdot \sigma(t\m)\sigma(td)$. Using \eref{d1} again shows that $\psi$ possesses a companion
\begin{displaymath}
    e=[s\m,\sigma(t\m)]\,[L_s,R_{\sigma(t)}\m]\left(c^{\sigma(t\m)}\cdot \sigma(t\m)\sigma(td)\right).
\end{displaymath}
By Lemma \ref{Lm:Decomp}, each of $\sigma$, $\tau$, $L_{s\sigma(t)}$, $L_{\sigma(t)}$ and $L_s$ belong to $\mathcal C(Q,S)$. Hence $\psi\in\inn(Q)\cap\mathcal C(Q,S)$. Since $s\sigma(t)\in S$, we see that $L_s\sigma L_t\tau$ decomposes as $L_{s\sigma(t)}\psi$ and $f(\psi) = e\nuc(Q)S$. For the homomorphic property, it remains to show that $e\equiv cd$ modulo $\nuc(Q)S$. Since $\sigma$ centralizes cosets of $S$ and we work modulo $\nuc(Q)S\ge S$, $e$ is equivalent to
\begin{displaymath}
	[s^{-1},t^{-1}][L_s,R_{t}\m](c^{(t\m)}\cdot t\m(td)).
\end{displaymath}
Since $s,t\in S$, we have further $e\equiv [L_s,R_t\m](cd)$. The left translation $L_s$ is identical modulo $S$, and $e\equiv cd$ follows.

The kernel of $f$ consist of all $L_s\sigma\in \mathcal C(Q,S)$ such that $\sigma$ has a companion in $\nuc(Q)S$. Since the companions are determined up to $\nuc(Q)$, the kernel coincides with $\mathcal C_0(Q,S)$.
\end{proof}

\begin{prop}\label{d3}
Let $S$ be a normal subloop of a Moufang loop $Q$. Is $S$ is $3$-divisible then $\mlt_Q(S)\unlhd \mathcal C_0(Q,S)$.
\end{prop}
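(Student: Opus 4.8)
The plan is to establish the inclusion $\mlt_Q(S)\le\mathcal C_0(Q,S)$ and then its normality. Since $\mathcal C_0(Q,S)$ is a group by Proposition~\ref{d2} and $\mlt_Q(S)=\langle L_s,R_s:s\in S\rangle$, I would reduce the inclusion to checking that $L_s,R_s\in\mathcal C_0(Q,S)$ for every $s\in S$, and reduce normality to checking that an arbitrary $\varphi\in\mathcal C_0(Q,S)$ conjugates each $L_s$ and each $R_s$ into $\mlt_Q(S)$.

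For the inclusion, recall first that $L_s,R_s\in\mathcal C(Q,S)$. The decomposition of $L_s$ in the sense of Lemma~\ref{Lm:Decomp} is $L_s\cdot\id$, and $\id$ is a pseudoautomorphism with companion $1\in S$, so $L_s\in\mathcal C_0(Q,S)$. The decomposition of $R_s$ is $R_s=L_sT_s^{-1}$, since $T_s^{-1}=L_s^{-1}R_s$ and $R_s(1)=s$; by~\eqref{ed4} we have $(s^{-3},T_s)\in\lps(Q)$, and inverting in $\lps(Q)$ via~\eqref{ed1} — using that the semiautomorphism $T_s$ fixes $s$, hence $s^3$ — gives $(s^3,T_s^{-1})\in\lps(Q)$, so $T_s^{-1}$ has the companion $s^3\in S$ and $R_s\in\mathcal C_0(Q,S)$. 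Thus $\mlt_Q(S)\le\mathcal C_0(Q,S)$.

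For normality I would take $\varphi\in\mathcal C_0(Q,S)$, write $\varphi=L_v\sigma$ with $v\in S$ and $\sigma\in\inn(Q)\cap\mathcal C(Q,S)$ as in Lemma~\ref{Lm:Decomp}, and fix a companion $c\in S$ of $\sigma$ (available by definition of $\mathcal C_0(Q,S)$). Since $\sigma(uS)=uS$ for all $u$, the case $u=1$ gives $\sigma(S)=S$. By~\eqref{ed2} the triple $(L_c\sigma,\sigma,L_c\sigma)$ is an autotopism of $Q$, and combining it with the standard identities $L_{\alpha(x)}=\gamma L_x\beta^{-1}$ and $R_{\beta(x)}=\gamma R_x\alpha^{-1}$ (valid for any autotopism $(\alpha,\beta,\gamma)$) produces the conjugation formulas $\sigma L_x\sigma^{-1}=L_c^{-1}L_{c\sigma(x)}$ and $\sigma R_x\sigma^{-1}=L_c^{-1}R_{\sigma(x)}L_c$ for all $x\in Q$. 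For $x=s\in S$ we have $\sigma(s)\in S$ and $c\sigma(s)\in S$, so both right-hand sides are products of translations by elements of $S$ and their inverses; since $Q$ has the inverse property, $L_w^{-1}=L_{w^{-1}}$ and $R_w^{-1}=R_{w^{-1}}$, hence these products lie in $\mlt_Q(S)$. Conjugating once more by $L_v$ and $L_v^{-1}=L_{v^{-1}}\in\mlt_Q(S)$ gives $\varphi L_s\varphi^{-1},\varphi R_s\varphi^{-1}\in\mlt_Q(S)$, so $\varphi\mlt_Q(S)\varphi^{-1}\subseteq\mlt_Q(S)$; applying the same to $\varphi^{-1}\in\mathcal C_0(Q,S)$ yields equality, and hence $\mlt_Q(S)\unlhd\mathcal C_0(Q,S)$.

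The step I expect to be the main obstacle is the normality argument, specifically setting up the conjugation of the translations $L_s,R_s$ by the pseudoautomorphism $\sigma$ through the autotopism $(L_c\sigma,\sigma,L_c\sigma)$ and tracking the companion so that it may be chosen inside $S$; once the two conjugation formulas are in hand, the proof collapses to the observation that the resulting elements are built solely from translations by elements of $S$. (This route appears to use only that $\sigma$ has a companion in $S$, i.e.\ membership in $\mathcal C_0(Q,S)$, and not the $3$-divisibility of $S$ itself, which is what makes $\mathcal C_0(Q,S)$ fit into the larger subnormal series~\eqref{Eq:Series}.)
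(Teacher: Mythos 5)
Your proof is correct, but the key step is handled quite differently from the paper's. The paper makes the same reduction to showing that $\sigma$ conjugates each $L_t$, $R_t$ ($t\in S$) back into $\mlt_Q(S)$, but then it genuinely uses $3$-divisibility: it picks $d\in S$ with $d^3=c$, forms $(c^{-1},T_d)(c,\sigma)=(1,T_d\sigma)$ so that $\alpha=T_d\sigma$ is an automorphism of $Q$ preserving $S$, and computes $L_t^{\sigma}=(T_dL_tT_{d^{-1}})^{\alpha}=T_{\alpha^{-1}(d)}L_{\alpha^{-1}(t)}T_{\alpha^{-1}(d^{-1})}\in\mlt_Q(S)$ (and similarly for $R_t$). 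You instead read the conjugation formulas $\sigma L_t\sigma^{-1}=L_c^{-1}L_{c\sigma(t)}$ and $\sigma R_t\sigma^{-1}=L_c^{-1}R_{\sigma(t)}L_c$ directly off the autotopism $(L_c\sigma,\sigma,L_c\sigma)$; both identities are correct for any autotopism, and all factors on the right are translations by elements of $S$, so the conclusion follows. Your route buys two things. First, it makes the containment $\mlt_Q(S)\le\mathcal C_0(Q,S)$ explicit, which the paper leaves implicit; your decompositions $L_s=L_s\cdot\id$ and $R_s=L_sT_s^{-1}$ with companion $s^3\in S$ are exactly the right justification. Second, as your closing parenthesis observes, your argument never invokes the $3$-divisibility of $S$: the paper needs it only to upgrade the \pseu{} $\sigma$ to an honest automorphism by extracting a cube root of its companion, and your direct autotopism computation sidesteps that entirely, so it establishes the proposition for an arbitrary normal subloop $S$.
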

\begin{proof}
Let $t\in S$ and $L_s\sigma\in\mathcal C_0(Q,S)$, where $s\in S$ and $\sigma\in\inn(Q)\cap\mathcal C(Q,S)$ has a companion $c\in S$. We need to show that  $L_t^{L_s\sigma}, R_t^{L_s\sigma}\in\mlt_Q(S)$. Since $L_t^{L_s\sigma} = (L_s^{-1}L_tL_s)^\sigma = (L_s^{-1})^\sigma L_t^\sigma L_s^\sigma$, $(L_s^{-1})^\sigma = (L_s^\sigma)^{-1}$ and similarly for $R_t^{L_s\sigma}$, we only need to show that $L_s^\sigma,R_s^\sigma\in\mlt_Q(S)$. We will prove $L_s^\sigma\in\mlt_Q(S)$, the other case following dually.

Since $S$ is 3-divisible, there is $d\in S$ such that $d^3 =c$. Note that $T_d(c)=c$. By \eqref{ed1}, $\lps(Q)$ contains $(c\m,T_d)(c,\sigma) = (c\m T_d(c),T_d\sigma) = (1,T_d\sigma)$, which means that $\alpha = T_d\sigma$ is an automorphism of $Q$. Recall that for all $x\in Q$ we have $L_x^\alpha = \alpha\m L_x\alpha = L_{\alpha{-1}(x)}$, $R_x^\alpha = R_{\alpha^{-1}(x)}$ and thus $T_x^\alpha = T_{\alpha^{-1}(x)}$. Also, $T_x^{-1} = T_{x^{-1}}$ in a Moufang loop. Therefore
\begin{displaymath}
    L_s^\sigma = (L_s^{T_d\m})^\alpha = (T_{d}L_sT_{d\m})^\alpha =T_{\alpha\m(d)}L_{\alpha\m(s)}T_{\alpha\m(d\m)}
\end{displaymath}
is in $\mlt_Q(S)$, since $\alpha(S)=T_d\sigma(S)=T_d(S)=S$.
\end{proof}

\begin{cor}\label{Cr:Series}
Let $S$ be a normal $3$-divisible subloop of a Moufang loop $Q$. Then
\begin{equation}\label{Eq:Series}
    \mlt_Q(S)\unlhd \mathcal C_0(Q,S)\unlhd C(Q,S)\unlhd\mlt(Q).
\end{equation}
\end{cor}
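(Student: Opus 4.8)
The plan is to obtain \eqref{Eq:Series} by concatenating the three normality relations just established, reading from the outside in. The outermost link $\mathcal C(Q,S)\unlhd\mlt(Q)$ needs nothing new: $\mathcal C(Q,S)$ was introduced as the kernel of the natural homomorphism $\mlt(Q)\to\mlt(Q/S)$ determined by $L_x\mapsto L_{xS}$ and $R_x\mapsto R_{xS}$, and a kernel is always a normal subgroup. The middle link $\mathcal C_0(Q,S)\unlhd\mathcal C(Q,S)$ is exactly the content of \pref{d2}; here $\mathcal C_0(Q,S)$ is a subset of $\mathcal C(Q,S)$ directly from its definition, and no divisibility hypothesis is used.

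For the innermost link $\mlt_Q(S)\unlhd\mathcal C_0(Q,S)$ I would invoke \pref{d3}, which is the single place where $3$-divisibility of $S$ genuinely enters (via extracting a cube root of a companion inside $S$, so as to replace the relevant pseudoautomorphism by an automorphism stabilizing $S$). One should first check that $\mlt_Q(S)$ is indeed contained in $\mathcal C_0(Q,S)$: for $s\in S$ the generator $L_s$ is $L_s\cdot\id$ with the identity inner mapping having companion $1\in S$, while $R_s=L_sT_{s^{-1}}$ with $T_{s^{-1}}$ having companion $(s^{-1})^{-3}=s^3\in S$ by the first identity in \eqref{ed4}; this containment is in any case implicit in the statement of \pref{d3}.

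There is essentially no obstacle here, all the real work having been done in \pref{d2} and \pref{d3}. The one point worth stating with care is that each ``$\unlhd$'' in \eqref{Eq:Series} means normality in the immediately larger term, so the chain is a subnormal series and we are not claiming that $\mlt_Q(S)$ or $\mathcal C_0(Q,S)$ is normal in $\mlt(Q)$ itself.
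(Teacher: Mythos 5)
Your proposal is correct and matches the paper's (implicit) argument: the corollary is stated without proof precisely because it is the concatenation of $\mathcal C(Q,S)\unlhd\mlt(Q)$ (a kernel), Proposition~\ref{d2}, and Proposition~\ref{d3}. Your explicit verification that the generators $L_s$ and $R_s=L_sT_{s^{-1}}$ lie in $\mathcal C_0(Q,S)$ (with companions $1$ and $s^3$ in $S$) is a correct and welcome addition.
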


\section{Moufang loops admitting triality automorphisms}\label{triality}

Glauberman observed in \cite[Theorem 6]{GlaubermanII} that if $Q$ is a Moufang loop with trivial nucleus, then the mappings
\begin{align*}
    &\sigma: L_x\mapsto R_x^{-1},\,R_x\mapsto L_x^{-1}\\
    &\rho: L_x\mapsto R_x,\,R_x\mapsto M_x^{-1}=L_x^{-1}R_x^{-1}\ ( \text{and }M_x\mapsto L_x^{-1})
\end{align*}
extend uniquely to automorphisms of $\mlt(Q)$. See \cite{Doro,Phillips} and \cite[Chapter 13]{Hall} for more information on groups with triality and Moufang loops.

We say that a Moufang loop $Q$ \emph{admits triality automorphisms} if the above maps $\sigma$ and $\rho$ extend into automorphisms of $\mlt(Q)$. (There exist Moufang loops with nontrivial nucleus that admit triality automorphisms.)

Suppose that a Moufang loop $Q$ admits triality automorphisms. Since the subgroup $\langle\sigma,\rho\rangle$ satisfies the standard presenting relations of the symmetric group $S_3$, it is isomorphic to a homomorphic image of $S_3$. Hence $Q$ induces a semidirect product $\mlt(Q)\rtimes S_3$ (with an action of $S_3$ that might not be faithful). Let $\alpha = \sigma\rho$ and $\beta = \sigma\rho^2$, so that $\alpha(L_x) = L_x^{-1}$, $\alpha(R_x) = M_x$, $\beta(L_x)=M_x$,  $\beta(R_x)=R_x^{-1}$ and $\sigma = \alpha\beta\alpha$. Note that $\sigma$ centralizes $\inn(Q)$. Indeed, $\sigma(T_x) = \sigma(R_x^{-1}L_x) = L_xR_x^{-1}=T_x$, $\sigma(L_{x,y}) = \sigma([L_x,R_y^{-1}]) = [R_x^{-1},L_y] = L_{x,y}$ and $\sigma(R_{x,y}) = \sigma([L_y^{-1},R_x]) = [R_y,L_x^{-1}] = R_{x,y}$, where we have used \eqref{ed4}.

Let $Q$ be a Moufang loop that admits triality automorphisms. A subgroup $U\le\mlt(Q)$ is a \emph{triality subgroup} if $U$ is invariant under the triality automorphisms $\sigma$ and $\rho$. If $U\unlhd \mlt(Q)$ then $U$ is a triality subgroup if and only if $U \unlhd \mlt(Q) \rtimes S_3$ under the induced action of $S_3$.

\begin{lem}\label{u3}
Suppose that $Q$ is a Moufang loop that admits triality automorphisms and that $U\unlhd \mlt(Q)$ is a triality subgroup. Let $S = U(1)$ be the orbit of $U$ that contains the element $1$. Then $S$ is a normal subloop of $Q$ and $\mlt_Q(S) \le U$.
\end{lem}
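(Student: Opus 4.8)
The plan is to reduce the lemma to a single claim: for every $s\in S$ both $L_s$ and $R_s$ belong to $U$. Once this is established, the remaining assertions are formal. I would start from the standard decomposition in $\mlt(Q)$: for any $\vhi\in\mlt(Q)$, putting $s=\vhi(1)$ makes $L_s\m\vhi$ fix $1$, so $L_s\m\vhi\in\inn(Q)$ and $\vhi=L_s\tau$ with $\tau\in\inn(Q)$.

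Now fix $\vhi\in U$ and write $\vhi=L_s\tau$ with $s=\vhi(1)\in S$ and $\tau\in\inn(Q)$. The key step is to apply the triality automorphism $\sigma$. Since $U$ is a triality subgroup, $\sigma(\vhi)\in U$; since $\sigma$ fixes $\inn(Q)$ pointwise (it fixes each standard generator $T_x$, $L_{x,y}$, $R_{x,y}$, as recorded earlier in this section, and these generate $\inn(Q)$) and $\sigma(L_s)=R_s\m$, we get $\sigma(\vhi)=R_s\m\tau$. Therefore $\vhi\,\sigma(\vhi)\m=L_s\tau\cdot\tau\m R_s=L_sR_s=M_s\in U$; the point of the computation is precisely that the factor $\tau$, which need not lie in $U$, drops out. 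I would then transport $M_s\in U$ along $\rho$: using $\rho(M_x)=L_x\m$ and $\rho(U)=U$ we obtain $L_s\m\in U$, hence $L_s\in U$; applying $\rho$ to $L_s$ and using $\rho(L_x)=R_x$ gives $R_s\in U$. This proves the claim.

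With the claim in hand, $\mlt_Q(S)=\langle L_s,R_s:s\in S\rangle\le U$ since $U$ is a subgroup. Also $S$ is a subloop: $1=\id(1)\in S$, and if $s,t\in S$ with $t=\psi(1)$, $\psi\in U$, then $st=L_s\psi(1)$, $s\ldiv t=L_s\m\psi(1)$ and $t\rdiv s=R_s\m\psi(1)$ all lie in $U(1)=S$ because $L_s,L_s\m,R_s\m\in U$. Finally, $S$ is normal: for $\tau\in\inn(Q)\le\mlt(Q)$ and $s=\vhi(1)$ with $\vhi\in U$ we have $\tau(s)=(\tau\vhi)(1)=(\tau\vhi\tau\m)(1)$ (using $\tau(1)=1$), and $\tau\vhi\tau\m\in U$ because $U\unlhd\mlt(Q)$, so $\tau(s)\in U(1)=S$; thus every inner mapping preserves $S$, and a subloop preserved by all inner mappings is normal (see \cite{Bruck}).

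I do not anticipate a genuine obstacle. The only nonroutine idea is to form $M_s=\vhi\,\sigma(\vhi)\m$ in order to sidestep the fact that the inner-mapping factor $\tau$ of $\vhi$ need not lie in $U$; after that one simply feeds the explicit action of $\sigma$ and $\rho$ on the translations into the triality-invariance and the normality of $U$.
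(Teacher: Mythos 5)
Your proof is correct and follows essentially the same route as the paper: the central trick of forming $M_s=\vhi\,\sigma(\vhi)\m$ to cancel the inner-mapping factor, and then transporting $M_s$ by the remaining triality automorphisms to recover $L_s$ and $R_s$, is exactly the paper's argument. The only (immaterial) differences are that you apply $\rho$ directly where the paper uses $\alpha=\sigma\rho$ and $\beta=\sigma\rho^2$, and you verify normality of $S$ by hand via inner mappings where the paper invokes the block-system argument for orbits of a normal subgroup of $\mlt(Q)$.
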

\begin{proof}
Since $U$ is normal, the blocks conjugate to $S$ in $\mlt(Q)$ form equivalence classes of a congruence of $Q$. This implies that $S$ is normal in $Q$. For each $s\in S$ there is $\vhi \in \inn(Q)$ such that $L_s\vhi \in U$. Then $M_s = L_sR_s  = L_s\vhi(R_s^{-1}\vhi)^{-1} = L_s\vhi\sigma(L_s\vhi)^{-1}\in U$. Hence also $R_s = \alpha(M_s)\in U$ and $L_s = \beta(M_s)\in U$.
\end{proof}

\begin{prop}\label{u6}
Let $Q$ be a $3$-divisible Moufang loop that admits triality automorphisms. Let $U\unlhd \mlt(Q)$ be a nontrivial commutative triality subgroup. Then $Q$ possesses a nontrivial subloop $X\unlhd Q$ that induces an abelian congruence of $Q$.
\end{prop}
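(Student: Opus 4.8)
The plan is to show that $X=S:=U(1)$, the orbit of $U$ through $1$ furnished by \lref{u3}, does the job. By \lref{u3} we already have $S\unlhd Q$ and $\mlt_Q(S)\le U$, so I would only need to check that $S$ is nontrivial and that $S$ induces an abelian congruence of $Q$. For nontriviality I would invoke the standard fact that $\mlt(Q)$ acts faithfully and transitively on $Q$, so the orbits of the nontrivial normal subgroup $U$ form a block system with nonsingleton blocks; in particular $|S|=|U(1)|>1$.

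To see that $S$ induces an abelian congruence I would verify the criterion of \tref{a6} (legitimate since $Q$ is $3$-divisible): $u\cdot xy=uy\cdot x$ for all $u\in Q$ and $x,y\in S$. The only ingredient is that $\mlt_Q(S)=\langle L_s,R_s:s\in S\rangle$, being a subgroup of the abelian group $U$, is abelian. The crucial point is that the commutator identities \eqref{ed4} then make the relevant inner mappings trivial: by \eqref{ed4}, $R_{x,y}=[L_y\m,R_x]$, and $L_y\m$ and $R_x$ commute because both lie in $\mlt_Q(S)$, so $R_{x,y}=\id$ for all $x,y\in S$. Since $R_{x,y}=R_{xy}\m R_yR_x$, this unravels to $R_yR_x=R_{xy}$, that is, $ux\cdot y=u\cdot xy$; and $R_xR_y=R_yR_x$ (both in the abelian group $U$) reads $uy\cdot x=ux\cdot y$. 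Combining, $u\cdot xy=ux\cdot y=uy\cdot x$ for all $u\in Q$ and $x,y\in S$, as required. The auxiliary fact that $S$ is a commutative group then falls out by setting $u=1$. One final application of \tref{a6} completes the proof.

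The part I expect to carry the weight is spotting that \eqref{ed4} converts ``$\mlt_Q(S)$ abelian'' straight into ``the obstruction maps $R_{x,y}$ vanish''; attacking $ux\cdot y=u\cdot xy$ directly via the Moufang laws is feasible but considerably messier, so I would avoid that route. The nontriviality of $U(1)$ is essential but routine, and $3$-divisibility contributes nothing beyond allowing the use of \tref{a6} rather than \tref{a2}.
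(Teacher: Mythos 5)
Your proposal is correct and follows the same overall skeleton as the paper: take $X=U(1)$, invoke \lref{u3} to get $X\unlhd Q$ and $\mlt_Q(X)\le U$, extract the identity $u\cdot xy=uy\cdot x$ from the commutativity of $U$, and finish with \tref{a6}. The one place where you genuinely diverge is the middle step. The paper deduces $[L_x,R_y]=\id$ for $x,y\in X$, reads this as $x\cdot uy=xu\cdot y$, and then invokes the Moufang theorem to transfer associativity to the triple $(u,x,y)$, obtaining $u\cdot xy=ux\cdot y$. You instead observe that $R_{x,y}=[L_y\m,R_x]$ by \eqref{ed4} is trivial because both factors lie in the abelian group $\mlt_Q(X)\le U$, which gives $R_{xy}=R_yR_x$ and hence $u\cdot xy=ux\cdot y$ directly; commutativity of the $R$'s then yields $ux\cdot y=uy\cdot x$. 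Your route trades an appeal to the (deep) Moufang theorem for the standard autotopism identities \eqref{ed4} that the paper has already recorded, so it is arguably the more economical of the two; both are equally rigorous. You also spell out the nontriviality of $U(1)$ (faithfulness plus the block system of a normal subgroup of a transitive group), which the paper leaves implicit.
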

\begin{proof}
By Lemma \ref{u3}, $X = U(1)$ is a normal subloop of $Q$ such $L_x,R_x\in U$ for all $x\in X$. Since $U$ is commutative, $[L_x,L_y] = \id_Q = [L_x,R_y]$ for all $x,y\in Q$. The first condition implies that $X$ is commutative. The second condition says that $x\cdot uy = xu\cdot y$ for all $x,y\in X$, $u\in Q$. Then, by Moufang Theorem \cite{Moufang, DrapalMouf}, $u\cdot xy = ux\cdot y$ for all $x,y\in X$, $u\in Q$.  Hence $u\cdot yx = u\cdot xy = ux\cdot y$ for all $x,y\in X$, $u\in Q$. By Theorem \ref{a6}, $X$ induces an abelian congruence of $Q$.
\end{proof}

\section{Solvability in finite Moufang loops of order coprime to three}\label{c}

In this section we show that the two concepts of solvability coincide for finite $3$-divisible Moufang loops. We start with a well known fact of elementary group theory:

\begin{lem}\label{d4}
Let $H$ be a subnormal subgroup of a finite group $G$. If there exists a nontrivial $p$-group $N\unlhd H$, $p$ a prime, then $G$ contains a nontrivial normal $p$-subgroup.
\end{lem}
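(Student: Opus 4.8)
The plan is to argue by induction on the \emph{subnormal defect} of $H$ in $G$, i.e.\ the length $k$ of a shortest chain $H=H_0\unlhd H_1\unlhd\cdots\unlhd H_k=G$, using the largest normal $p$-subgroup $O_p(K)$ of a finite group $K$ as the principal tool. The three facts I would invoke, all elementary, are: (a) if $K$ possesses a nontrivial normal $p$-subgroup then $O_p(K)\ne 1$, since every normal $p$-subgroup of $K$ is contained in $O_p(K)$; (b) $O_p(K)$ is a characteristic subgroup of $K$, being defined purely group-theoretically; and (c) a characteristic subgroup of a normal subgroup is normal.

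For the base case $k=0$ we have $H=G$, and $N\unlhd G$ is the desired nontrivial normal $p$-subgroup. For the inductive step, suppose $k\ge 1$ and the statement holds for subnormal subgroups of defect less than $k$. Write $H=H_0\unlhd H_1\unlhd\cdots\unlhd H_k=G$. By fact (a), $M:=O_p(H)$ is a nontrivial $p$-group (it contains $N$), and by fact (b) it is characteristic in $H$; since $H\unlhd H_1$, fact (c) gives $M\unlhd H_1$. Now $H_1$ is subnormal in $G$ of defect at most $k-1$ and $M\unlhd H_1$ is a nontrivial $p$-group, so the induction hypothesis applied to $H_1$ (with $M$ in place of $N$) yields a nontrivial normal $p$-subgroup of $G$.

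There is no real obstacle here; the only point requiring a moment's care is the bookkeeping that $O_p(H)$ ``propagates upward'' one step along the subnormal chain, which is exactly the combination of facts (b) and (c). Everything else is a straightforward induction, and the argument uses only standard finite group theory.
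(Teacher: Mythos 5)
Your proof is correct and follows essentially the same strategy as the paper's: an induction along the subnormal chain whose key step promotes a characteristic $p$-subgroup of $H$ to a normal subgroup of the next term via ``characteristic in normal is normal.'' The only difference is cosmetic: you take $O_p(H)$ as the characteristic $p$-subgroup, while the paper passes to a minimal characteristic subgroup containing a minimal normal $p$-subgroup and argues it must be a $p$-group; your choice is slightly more direct.
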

\begin{proof}
Without loss of generality it may be assumed that $N$ is a minimal normal $p$-subgroup of $H$. Thus there exists $U\le H$ such that $N\le U$ and $U$ is a minimal characteristic subgroup of $H$. Since $N\unlhd U$, the group $U$ has to be a $p$-group.

If $H\unlhd G$ then $U\unlhd G$ since $U$ is characteristic in $H$. This proves the case $k=1$ of the general case $N\unlhd H = H_k \unlhd \cdots \unlhd H_1 \unlhd H_0=G$. Suppose that $k>1$ and proceed by induction. By the induction assumption there exists a nontrivial normal $p$-subgroup of $H_1$. We are done by the case $k=1$.
\end{proof}

Next, let us recall a result of Glauberman and Wright on Moufang loops of prime power order. The odd case was established in \cite{GlaubermanII} and the even case in \cite{GW}. A new proof that covers both cases can be found in \cite{dranil}.

\begin{thm}[\cite{GlaubermanII,GW}]\label{Th:GGW}
Let $Q$ be a Moufang loop of prime power order $p^k$. Then $Q$ is centrally nilpotent and $\mlt(Q)$ is a $p$-group.
\end{thm}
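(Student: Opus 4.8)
The plan is to reduce the whole statement to the single assertion that $\mlt(Q)$ is a $p$-group. Granting that, $\inn(Q)\le\mlt(Q)$ is a $p$-group acting on the $p$-element set $Q$, and since the elements of $Q$ fixed by every inner mapping are exactly those of $Z(Q)$, an orbit count gives $|Z(Q)|\equiv|Q|\equiv 0\pmod p$, so $Z(Q)\neq 1$; then $Q/Z(Q)$ is a Moufang loop of smaller prime power order, and induction on $|Q|$ shows that $Q$ is centrally nilpotent. So I would concentrate on proving that $\mlt(Q)$ is a $p$-group, by induction on $|Q|=p^k$, the case $k=0$ being trivial.

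For the inductive step I would first treat the case $\nuc(Q)\neq 1$. Here the conjugation action of $Q$ on the normal subgroup $\nuc(Q)$ is a loop homomorphism onto a group (cf.~\cite{Bruck}), hence onto a group $P$ which --- being a homomorphic image of the $p$-loop $Q$ --- is a $p$-group. As $P$ is a $p$-group acting on the $p$-group $\nuc(Q)$, its fixed points in $\nuc(Q)$ form a nontrivial subgroup (a nontrivial normal subgroup of a finite $p$-group meets its center), and that subgroup is precisely $Z(Q)$; so $Z(Q)\neq 1$. Now set $N=Z(Q)$: this is a central commutative $p$-subgroup, hence normal in $Q$. By induction $\mlt(Q/N)$ is a $p$-group, and the kernel of $\mlt(Q)\to\mlt(Q/N)$ is $\mathcal C(Q,N)$; by \lref{Lm:Decomp}, $\mathcal C(Q,N)=\mlt_Q(N)\cdot\big(\inn(Q)\cap\mathcal C(Q,N)\big)$ is a product of two finite subgroups, each a $p$-group. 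Indeed $\mlt_Q(N)$ is a homomorphic image of $N\times N$, while --- since every inner mapping fixes $N\subseteq Z(Q)$ pointwise and $N$ is nuclear --- the assignment $\sigma\mapsto\big(u\mapsto u^{-1}\sigma(u)\big)$ is an injective homomorphism of $\inn(Q)\cap\mathcal C(Q,N)$ into the finite $p$-group $N^{Q}$. Hence $\mathcal C(Q,N)$, and therefore $\mlt(Q)$, is a $p$-group.

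The remaining case, $\nuc(Q)=1$, is the one I expect to be the main obstacle. It is in fact vacuous, but the bookkeeping above cannot even begin the induction without a nontrivial nucleus, and the orbit-count proof that $Z(Q)\neq 1$ presupposes that $\mlt(Q)$ is a $p$-group --- so the two halves of the theorem are genuinely entangled. To break the deadlock, when $\nuc(Q)=1$ one has by \cite[Theorem 6]{GlaubermanII} (see \secref{triality}) that $\mlt(Q)$ admits triality automorphisms, so one may pass to the group with triality $G=\mlt(Q)\rtimes S_3$, in which $\mlt(Q)$ is a normal subgroup and $Q$ is recovered as a twisted conjugacy class of $G$. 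The goal is then to show that $|G|$ --- equivalently $|\mlt(Q)|$ --- involves no prime other than $p$: for odd $p$ this is a transfer of Sylow theory and Glauberman's odd-order methods to $G$, while the genuinely delicate case is $p=2$, where the involutions of $G$ obstruct the odd-order arguments and the $2$-local structure of $G$ must be analysed directly (this is exactly why the even case of \cite{GW} came after the odd case of \cite{GlaubermanII}; a uniform treatment is in \cite{dranil}). Once $\mlt(Q)$ is known to be a $p$-group, $Z(Q)\neq 1$ by the orbit count, contradicting $Z(Q)\subseteq\nuc(Q)=1$; hence $\nuc(Q)\neq 1$ always, and the argument of the second paragraph finishes the proof.
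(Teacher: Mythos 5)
The paper does not actually prove this statement: Theorem \ref{Th:GGW} is imported from \cite{GlaubermanII} (odd case) and \cite{GW} (even case), with a uniform proof in \cite{dranil}, so there is no in-paper argument to compare yours against. Judged on its own, your proposal is correct exactly as far as it goes. The reduction of central nilpotency to ``$\mlt(Q)$ is a $p$-group'' via the orbit count on the fixed points of $\inn(Q)$ is standard and fine, and the inductive step when $\nuc(Q)\ne 1$ is sound: $x\mapsto T_x|_{\nuc(Q)}$ is a homomorphism of $Q$ onto a $p$-group of automorphisms of the $p$-group $\nuc(Q)$, forcing $Z(Q)\ne 1$; and your analysis of the kernel $\mathcal C(Q,Z(Q))$ of $\mlt(Q)\to\mlt(Q/Z(Q))$ via \lref{Lm:Decomp} works, since $\mlt_Q(Z(Q))\cong Z(Q)$ and the map $\sigma\mapsto\bigl(u\mapsto u^{-1}\sigma(u)\bigr)$ really is an injective homomorphism of $\inn(Q)\cap\mathcal C(Q,Z(Q))$ into $Z(Q)^Q$ (it is multiplicative because inner mappings are pseudoautomorphisms fixing the central, nuclear subgroup pointwise).

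The genuine gap is the case $\nuc(Q)=1$, which is the entire content of the theorem. For it you offer only a pointer to ``Sylow theory and Glauberman's odd-order methods'' and ``the $2$-local structure of $G$''; that is a citation, not an argument. Nothing in your proposal excludes a Moufang loop of order $p^k$ with trivial nucleus, and the orbit-count contradiction you invoke to rule it out presupposes the very fact ($\mlt(Q)$ is a $p$-group) that is in question in that case --- the ``entanglement'' you point out is not broken, only relocated to the references. Every known proof (Glauberman's, Glauberman--Wright's, Dr\'apal's in \cite{dranil}) does its real work precisely here. So your write-up reduces the theorem to its hard kernel and then defers that kernel back to the literature; as a self-contained proof it is incomplete, and given that the paper itself treats Theorem \ref{Th:GGW} as an external input, simply citing \cite{GlaubermanII,GW,dranil} would have been the honest and sufficient move.
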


We will not use Theorem \ref{Th:Gpi} below but we offer it as a motivation for Theorem \ref{c1}, which is taken from \cite{dranil}. We have included a proof of Theorem \ref{c1} for the convenience of the reader.

For a set of primes $\pi$, a power associative loop $Q$ is a \emph{$\pi$-loop} if for every $x\in Q$ the order of $x$ is a power of a prime from $\pi$.

\begin{thm}[{\cite[Theorem 3]{GlaubermanII}}]\label{Th:Gpi}
Let $Q$ be a Moufang loop of odd order, $\pi$ a set of primes and $S$ a classically solvable $\pi$-subloop of $Q$. Then $\mlt_Q(S)$ is a solvable $\pi$-group.
\end{thm}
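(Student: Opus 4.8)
The plan is to induct on $|S|$, the case $S=1$ being trivial. I would first reduce the whole statement to the single assertion that every prime divisor of $|\mlt_Q(S)|$ divides $|S|$. Granting this: $S$ is a finite Moufang loop, so Cauchy's theorem holds in $S$, and hence every prime dividing $|S|$ lies in $\pi$ and divides $|Q|$, so it is odd; consequently $\mlt_Q(S)$ is a $\pi$-group of odd order, and it is solvable by the Feit--Thompson theorem. Thus the entire content is the control of the prime divisors of $|\mlt_Q(S)|$.

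Restriction to the $\mlt_Q(S)$-invariant subset $S$ gives a surjective homomorphism $\mlt_Q(S)\to\mlt(S)$ onto the full multiplication group of the Moufang loop $S$, so it suffices to control the primes of $|\mlt(S)|$ and of the kernel $K$ separately. For $\mlt(S)$ I would run the same induction: by classical solvability, and using Lemma \ref{Lm:ElAb} inside $S$, choose a nontrivial commutative group $A\unlhd S$ with $S/A$ classically solvable; provided $|S|$ is not a power of $3$ one can arrange $A$ to be a $q$-group with $q\ne 3$, hence $3$-divisible, so Corollary \ref{Cr:Series} applies inside $S$ and yields $\mlt_S(A)\unlhd\mathcal C_0(S,A)\unlhd\mathcal C(S,A)\unlhd\mlt(S)$. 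One then checks each factor: $\mlt(S)/\mathcal C(S,A)\cong\mlt(S/A)$ is handled by the inductive hypothesis, with $S/A$ — a classically solvable $\pi$-loop of order $|S|/|A|<|S|$ — playing the role of $Q=S$; $\mathcal C(S,A)/\mathcal C_0(S,A)$ embeds by the homomorphism $f$ from the proof of Proposition \ref{d2} into the loop $S/(\nuc(S)A)$, hence is a finite group all of whose elements have $\pi$-order; $\mathcal C_0(S,A)/\mlt_S(A)$ is controlled by the pseudoautomorphism machinery of Section \ref{d} (Lemmas \ref{d1}, \ref{Lm:Decomp}, Proposition \ref{d3}, and the $3$-divisibility of $A$); and $\mlt_S(A)$ is handled because $A$ is a commutative $q$-group, bounding the orders of $L_a,R_a$ via the autotopisms \eqref{ed3} and the identities \eqref{ed4} — this last point is in the spirit of Theorem \ref{c1}. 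If instead $|S|$ is a power of $3$, then $S$ is a $3$-loop and $\mlt(S)$, indeed $\mlt_Q(S)$, is a $3$-group by Theorem \ref{c1}/Theorem \ref{Th:GGW}.

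It remains to bridge from $\mlt(S)$ to $\mlt_Q(S)$, i.e. to bound the primes dividing $|K|$, where $K\unlhd\mlt_Q(S)$ is the kernel of the restriction to $S$; concretely $K$ consists of the inner mappings of $Q$ that fix $S$ pointwise and are expressible through left and right translations by elements of $S$. I would bound $|K|$ by letting $K$ act on the set of orbits of $\mlt_Q(S)$ on $Q$ (equivalently, on the coset partition of $S$ in $Q$) and inducting, or by realising $K$ as a section of a relative pseudoautomorphism group attached to $S$.

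I expect this bridge step to be the main obstacle, and it is precisely the reflection of the fact that $S$ is only a subloop, not a normal subloop, of $Q$. When $S\unlhd Q$ one has $\mlt_Q(S)\le\mathcal C(Q,S)\unlhd\mlt(Q)$ and Corollary \ref{Cr:Series} applies verbatim; without normality there is no such series, and the inner mappings of $Q$ built from translations by elements of $S$ must be controlled directly, which is where Glauberman's structure theory of Moufang loops of odd order does the real work. An economical alternative, if one is prepared to quote it, is the theorem that $\mlt(Q)$ itself has odd order whenever $Q$ is a Moufang loop of odd order: then $\mlt_Q(S)\le\mlt(Q)$ is automatically solvable, and only the $\pi$-group assertion remains, to be extracted by the series argument above.
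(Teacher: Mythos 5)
The paper does not prove this statement at all: it is quoted verbatim from Glauberman's \emph{On loops of odd order II} (Theorem 3 there), and the authors explicitly say they will not use it, offering it only as motivation for Theorem \ref{c1}. So there is no in-paper proof to compare against; what can be assessed is whether your sketch would stand on its own, and it would not.

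The decisive gap is the one you name yourself: the kernel $K$ of the restriction $\mlt_Q(S)\to\mlt(S)$. Since $S$ is merely a subloop, not normal in $Q$, none of the machinery of Section \ref{d} (Lemma \ref{Lm:Decomp}, Proposition \ref{d2}, Corollary \ref{Cr:Series}) applies to the pair $(Q,S)$, and ``act on the orbits and induct'' or ``realise $K$ as a section of a relative pseudoautomorphism group'' is not an argument --- it is precisely the point where Glauberman's structure theory does the work, and where the paper's own Theorem \ref{c1} instead uses the companion homomorphism $\vhi\mapsto C(\vhi)$ into $SN/N$ to control $\fix_Q(S)$. Quoting that $\mlt(Q)$ has odd order is not an escape either: that is itself one of Glauberman's theorems from the same circle of results, so you would be assuming something of comparable depth to what is being proved. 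A second, concrete error lies in your treatment of $\mlt(S)$: the dichotomy ``either $|S|$ is a power of $3$, or one can choose the normal commutative subgroup $A$ to be a $q$-group with $q\ne 3$'' is false. A classically solvable loop $S$ of order divisible by $3$ and by other primes can have all of its minimal normal subloops equal to elementary abelian $3$-groups (already for groups, e.g.\ $(\zbb_3)^3\rtimes\zbb_{13}$), and then $A$ is never $3$-divisible, Corollary \ref{Cr:Series} and Proposition \ref{d3} do not apply, and your induction has no branch to enter. The reduction to prime divisors, the appeal to Cauchy's theorem in odd-order Moufang loops, and the use of Feit--Thompson for solvability are all fine and are indeed in the spirit of Glauberman's original argument, but the two gaps above mean the proposal is a plausible outline rather than a proof.
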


\begin{thm}[\cite{dranil}]\label{c1}
Let $Q$ be a finite Moufang loop, $p$ a prime and $S$ a $p$-subloop of $Q$. Then $\mlt_Q(S)$ is a $p$-group.
\end{thm}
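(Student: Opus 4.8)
The plan is to reduce, via \tref{Th:GGW}, to a statement about the kernel of a restriction homomorphism. Since $S$ is a $p$-subloop of the finite Moufang loop $Q$, it has $p$-power order. Put $G=\mlt_Q(S)$. Being a subloop, $S$ is invariant under every $L_s$ and $R_s$ with $s\in S$, hence under $G$; and since $L_s(1)=R_s(1)=s$, the orbit of $1$ under $G$ is exactly $S$. Thus $\varphi\mapsto\varphi|_S$ is a homomorphism $\pi\colon G\to\mlt(S)$ whose image is all of $\mlt(S)$ and whose kernel is $K=\{\varphi\in G:\varphi|_S=\id_S\}$. By \tref{Th:GGW} the group $\mlt(S)$ is a $p$-group, so $|G|=|K|\cdot|\mlt(S)|$, and it remains to prove that $K$ is a $p$-group; by Cauchy's theorem this amounts to showing that every element of $K$ has $p$-power order.

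Next I would determine the structure of $K$. Every $\varphi\in K$ fixes $1$, hence lies in $\inn(Q)$ and is a \pseu{} of $Q$. The first key point is that such a $\varphi$ admits a companion in $S$: taking $\{L_s:s\in S\}$ as a transversal of the point stabilizer $\inn(Q)\cap G$ of $1$ in $G$, each Schreier generator of $\inn(Q)\cap G$ is a product $L_{s_1}L_s^{\pm1}L_{s_2}^{-1}$ or $L_{s_1}R_s^{\pm1}L_{s_2}^{-1}$ with $s,s_1,s_2\in S$, an inner mapping whose companion, after rewriting via \eqref{ea1}, \eqref{ed4} and \lref{d1}, can be taken in $S$; and since every element of $\inn(Q)\cap G$ maps $S$ onto itself, the laws $(c,\varphi)(d,\psi)=(c\varphi(d),\varphi\psi)$ and $(c,\varphi)\m=(\varphi\m(c\m),\varphi\m)$ of $\lps(Q)$ keep companions inside $S$. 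Now if $\varphi\in K$ has companion $c\in S$, then, using $\varphi|_S=\id_S$, the identity $c\varphi(x)\cdot\varphi(y)=c\varphi(xy)$ collapses to $cx\cdot y=c\cdot xy$ for all $x,y\in S$, so $c\in\lnuc(S)=\nuc(S)$, a subgroup of the $p$-loop $S$; in particular $c$ has $p$-power order. Moreover $\varphi(c)=c$ (as $c\in\nuc(S)\subseteq S$ and $\varphi|_S=\id_S$), whence $(c,\varphi)^n=(c^n,\varphi^n)$ in $\lps(Q)$; so on the subgroup $H=\{(c,\varphi)\in\lps(Q):\varphi\in K,\ c\in\nuc(S)\}$ the projection $(c,\varphi)\mapsto c$ is a homomorphism onto a subgroup of the $p$-group $\nuc(S)$, with kernel $\{(1,\varphi):\varphi\in K\cap\aut(Q)\}$. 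Since $H$ also maps onto $K$ by $(c,\varphi)\mapsto\varphi$ with kernel the $p$-group $\nuc(S)\cap\lnuc(Q)$, chasing these maps shows that $K$ is a $p$-group if and only if $K\cap\aut(Q)$ is. Hence the theorem reduces to the assertion that the group of those automorphisms of $Q$ which lie in $\mlt_Q(S)$ and fix $S$ pointwise is a $p$-group.

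This last assertion is the main obstacle. I expect it to be established by induction on $|Q|$: the case $Q=S$ is \tref{Th:GGW} itself, and in general one passes to a minimal normal subloop $N$ of $Q$ and to $Q/N$, observing that $\mlt_Q(S)$ maps onto $\mlt_{Q/N}(SN/N)$---a $p$-group by the inductive hypothesis, as $SN/N$ is again a $p$-subloop---while the kernel of this map on $\mlt_Q(S)$ should again be forced to be a $p$-group by a companion analysis of the above type, using (when $N$ is a $p'$-loop) that the orbit of $1$ under $\mlt_Q(S)$ is the $p$-loop $S$. The delicate part is to pin down, via the central nilpotence of the $p$-loop $S$ given by \tref{Th:GGW} together with the autotopism identities \eqref{ed3}--\eqref{ed4}, exactly which automorphisms of $Q$ lying in $\mlt_Q(S)$ can fix $S$ pointwise; in the group case these are precisely the conjugations by elements of the center of $S$, so that $K\cap\aut(Q)$ is a quotient of $Z(S)$, but in the Moufang setting this requires the structural machinery of \cite{dranil}. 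I expect this step, rather than the essentially formal reductions of the first two paragraphs, to carry the weight of the proof.
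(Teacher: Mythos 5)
Your first two reductions are essentially the paper's own: the restriction epimorphism $\mlt_Q(S)\to\mlt(S)$ with kernel $K=\fix_Q(S)$, and then the companion map on $K$ (the paper sends $\vhi$ to its companion coset in $SN/N$ with $N=\nuc(Q)$, you send it to a companion in $\nuc(S)$ via $\lps(Q)$; these are equivalent reductions) to conclude that it suffices to show $A=K\cap\aut(Q)$ is a $p$-group. Up to that point the argument is sound.

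The genuine gap is the final step, which you explicitly leave open and for which you predict the wrong mechanism. No induction on $|Q|$, no minimal normal subloop, and no structural analysis of which automorphisms can fix $S$ pointwise is needed; the paper finishes with a short elementary argument that you are missing. Namely, if $\alpha\in A$ then $\alpha$ is an automorphism of $Q$ fixing $S$ pointwise, so $\alpha L_s\alpha^{-1}=L_{\alpha(s)}=L_s$ and likewise for $R_s$; hence $\alpha$ centralizes all generators of $\mlt_Q(S)$ and $A\le Z(\mlt_Q(S))$ is abelian. Write $A=B\times D$ with $B$ the $p$-primary component. Then $\mlt_Q(S)/D$ is a $p$-group (the three factors $\mlt_Q(S)/\fix_Q(S)$, $\fix_Q(S)/A$, $A/D$ all are), and $D$ is a central normal subgroup of order coprime to its index, so by Schur--Zassenhaus it has a complement $P$, which is a Sylow $p$-subgroup and is normal (hence unique) because $D$ is central. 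Since $|L_s|=|R_s|=|s|$ is a power of $p$ by diassociativity and the elementwise Lagrange theorem, every generator lies in $P$, forcing $P=\mlt_Q(S)$ and $D=1$. In particular it is \emph{not} shown directly that $A$ is a quotient of $Z(S)$ or anything of that sort; one never determines $A$, one only uses its centrality to split off and then kill the $p'$-part. Without this (or some substitute), your proposal does not prove the theorem.
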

\begin{proof}
Restricting $\vhi\in\mlt_Q(S)$ to $\mlt(S)$ yields an epimorphism with kernel $\fix_Q(S) = \{\vhi\in\mlt_Q(S):\vhi(s)=s$ for all $s\in S\}$. By Theorem \ref{Th:GGW}, $\mlt(S)$ is a $p$-group and therefore $\mlt_Q(S)/\fix_Q(S)$ is a $p$-group.

Consider the group $\inn_Q(S)=\mlt_Q(S)\cap\inn(Q) = \langle T_s,L_{s,t},R_{s,t}:s,t\in S\rangle$. For each $\vhi\in\inn_Q(S)$, let $C(\vhi)$ be the set of all companions of $\vhi$, a coset of $N=\nuc(Q)$. Since the standard generators of $\inn_Q(S)$ are pseudoautomorphisms with companions in $S$ and every $\vhi\in\inn_Q(S)$ satisfies $\vhi(S)=S$, it follows from \eqref{ed1} that $C(\vhi)\cap S\ne\emptyset$.

For $\vhi,\psi \in \fix_Q(S)$, we therefore certainly have $c,d\in S$ such that  $(c,\vhi)$, $(d,\psi)\in \lps(Q)$. Since $\vhi(d) = d$, we also have $(cd,\vhi\psi) = (c,\vhi)(d,\psi)\in \lps(Q)$ by \eqref{ed1}, proving that $cd\in C(\vhi\psi)$. The element $cd$ also belongs to $C(\vhi)C(\psi) = cNdN = cdN$. Hence $\vhi\mapsto C(\vhi)$ is
a homomorphism from the group $\fix_Q(S)$ to the loop $SN/N \cong S/(S\cap N)$. Let $A$ be the kernel of this homomorphism. Being associative, the image of the homomorphism is equal to a subgroup of $S/(S\cap N)$, some $p$-group. Hence $\fix_Q(S)/A$ is a $p$-group.

The kernel $A$ consists of all automorphisms of $Q$ that are contained in $\fix_Q(S)$. Now, $\alpha L_s\alpha^{-1} = L_{\alpha(s)} = L_s$ for every $s\in S$ and $\alpha \in A$. Similarly for $R_s$. Hence $A \le Z(\mlt_Q(S))$ is a commutative group. Write $A$ as $B\times D$, where $B$ is the $p$-primary component of $A$. Both $B$ and $D$ are normal subgroups of $\mlt_Q(S)$, being central. All three $\mlt_Q(S)/\fix_Q(S)$, $\fix_Q(S)/A$ and $A/D$ are $p$-groups. Hence $\mlt_Q(S)/D$ is a $p$-group. The subgroup $D$ is commutative, normal and of order coprime to $\mlt_Q(S)/D$. Hence it possesses a complement in $\mlt_Q(S)$, say $P$, a Sylow $p$-subgroup of $\mlt_Q(S)$. Since $D\le Z(\mlt_Q(S))$, $P$ is a normal subgroup of $\mlt_Q(S)$ and hence the unique Sylow $p$-subgroup of $\mlt_Q(S)$. For $s\in S$, we have $|L_s|=|R_s|=|s|$ by diassociativity. Since $S$ is a $p$-loop and the elementwise Lagrange theorem holds in Moufang loops, it follows that both $L_s$ and $R_s$ belong to $P$. Hence $P=\mlt_Q(S)=\langle L_s,R_s:s\in S\rangle$.
\end{proof}

\begin{prop}\label{u4}
Let $X$ be a commutative normal subloop of a Moufang loop $Q$. If $X\le\nuc(Q)$ then $X$ induces an abelian congruence of $Q$.
\end{prop}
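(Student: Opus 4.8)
The plan is to verify the two conditions of Theorem~\ref{a2}: that every inner mapping of $Q$ restricts to an automorphism of $X$, and that $u\cdot xy = uy\cdot x$ for all $u\in Q$ and $x,y\in X$. Write $N=\nuc(Q)$, and recall that $N$ is a normal subloop of $Q$ and that every Moufang loop has the inverse property, so that left and right divisions are given by $x\ldiv y = x^{-1}y$ and $x\rdiv y = xy^{-1}$.

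The second condition is immediate. Since $X$ is commutative we have $xy=yx$, and since $y\in N$ it associates in every position, so $u\cdot xy = u\cdot yx = (uy)\cdot x$.

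For the first condition, I would first observe that the inner mappings of $Q$ that restrict to an automorphism of $X$ form a subgroup of $\inn(Q)$: every inner mapping carries $X$ onto $X$ because $X\unlhd Q$, and compositions and inverses of bijective homomorphisms $X\to X$ are again bijective homomorphisms. It therefore suffices to check the claim on the standard generators $T_u$, $L_{u,v}$, $R_{u,v}$ of $\inn(Q)$. Using only the defining identities of the nucleus and the inverse property, one gets for $a\in N$ that $L_{u,v}(a)=L_{uv}^{-1}L_uL_v(a)=L_{uv}^{-1}\bigl((uv)a\bigr)=a$, and symmetrically $R_{u,v}(a)=a$; hence $L_{u,v}$ and $R_{u,v}$ restrict to the identity map on $X$. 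For $T_u=R_u^{-1}L_u$ the inverse property gives $T_u(a)=(ua)u^{-1}$ for $a\in N$, and this map carries $X$ onto $X$ since $X\unlhd Q$. Re-associating the length-four product one nucleus element at a time --- which ultimately just cancels the inner factor $u^{-1}u$ --- yields, for $x,y\in X$,
\[
T_u(x)\cdot T_u(y)=\bigl((ux)u^{-1}\bigr)\bigl((uy)u^{-1}\bigr)=\bigl(u(xy)\bigr)u^{-1}=T_u(xy),
\]
so $T_u$ restricts to an automorphism of $X$. Both conditions of Theorem~\ref{a2} then hold, and the proof is complete.

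The argument is essentially formal; the only place demanding a little care is the string of nucleus re-associations in the displayed identity for $T_u$, and I do not anticipate any real obstacle there.
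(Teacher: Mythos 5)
Your proof is correct and follows essentially the same route as the paper: it verifies the two conditions of Theorem~\ref{a2}, using that $L_{u,v}$ and $R_{u,v}$ act trivially on the nucleus and that $T_u$ restricted to the nucleus is multiplicative (the paper's computation \eqref{eu4} is exactly your displayed identity, likewise relying on $T_u(x)\in\nuc(Q)$ via normality of the nucleus). You are merely more explicit than the paper about the reduction to generators and about the condition $u\cdot xy=uy\cdot x$; no gaps.
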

\begin{proof}
By \cite{Bruck}, $\nuc(Q) \unlhd Q$. Hence if $a,b\in\nuc(Q)$ and $x\in Q$ then
\begin{equation}\label{eu4}
    T_x(ab) = (x\cdot ab)x^{-1} = (xa\cdot b)x\m = (T_x(a)x\cdot b)x\m = (T_x(a)\cdot xb)x\m = T_x(a)T_x(b),
\end{equation}
where we used $T_x(a)\in\nuc(Q)$ in the last step. The remaining standard generators of $\inn(Q)$ act trivially on $\nuc(Q)$. By Theorem \ref{a2}, $X$ induces an abelian congruence of $Q$.
\end{proof}

\begin{prop}\label{c2}
Let $Q$ be a finite $3$-divisible Moufang loop with a nontrivial normal $p$-subloop $S$, $p\ne 3$ a prime. Then $\mlt(Q)$ contains a nontrivial normal elementary abelian $p$-subgroup. Furthermore, if $Q$ also admits triality automorphisms, then $\mlt(Q)$ contains a nontrivial normal elementary abelian triality $p$-subgroup.
\end{prop}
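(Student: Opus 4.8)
The plan is to reduce both assertions to the existence of a single nontrivial normal $p$-subgroup of $\mlt(Q)$, and then to pass to a canonical subgroup of it that is automatically preserved by every automorphism of $\mlt(Q)$, in particular by the triality automorphisms.

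First I would check that $S$ is $3$-divisible. Since $S$ is a $p$-loop with $p\ne 3$, every $x\in S$ has order a power of $p$, hence order coprime to $3$, hence $x$ is a cube in the cyclic group $\langle x\rangle$; thus $x\mapsto x^3$ is onto $S$. Now $S$ is a normal $3$-divisible subloop of the Moufang loop $Q$, so \cref{Cr:Series} supplies the subnormal chain $\mlt_Q(S)\unlhd\mathcal C_0(Q,S)\unlhd\mathcal C(Q,S)\unlhd\mlt(Q)$; in particular $\mlt_Q(S)$ is subnormal in $\mlt(Q)$. By \tref{c1} the group $\mlt_Q(S)$ is a $p$-group, and it is nontrivial because $S\ne 1$ forces $L_s\ne\id$ for some $s\in S$. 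Applying \lref{d4} with $G=\mlt(Q)$ and $H=N=\mlt_Q(S)$ shows that $\mlt(Q)$ contains a nontrivial normal $p$-subgroup, so the $p$-core $P=O_p(\mlt(Q))$, the largest normal $p$-subgroup of $\mlt(Q)$, is nontrivial.

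Next, let $U=\Omega_1(Z(P))$ be the subgroup of the centre of $P$ generated by the elements of order dividing $p$. As $P$ is a nontrivial $p$-group, $Z(P)\ne 1$, so $U$ is a nontrivial elementary abelian $p$-group; and since $U$ is characteristic in $P$ while $P$ is characteristic in $\mlt(Q)$, we get $U\unlhd\mlt(Q)$. This proves the first assertion. For the second, the same $U$ works: if $Q$ admits triality automorphisms $\sigma,\rho$, these are automorphisms of $\mlt(Q)$ and therefore fix the characteristic subgroup $P=O_p(\mlt(Q))$ setwise; an automorphism of $\mlt(Q)$ stabilizing $P$ also stabilizes $Z(P)$ and the set of its elements of order dividing $p$, hence stabilizes $U$. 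Thus $U$ is invariant under $\sigma$ and $\rho$, i.e.\ $U$ is a triality subgroup, and $U$ is the required nontrivial normal elementary abelian triality $p$-subgroup of $\mlt(Q)$.

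The only real point is the observation in the previous paragraph that triality-invariance need not be tracked through the series \eqref{Eq:Series} — the intermediate groups $\mathcal C_0(Q,S)$ and $\mathcal C(Q,S)$ are not visibly triality subgroups — because it suffices to produce any nontrivial normal $p$-subgroup of $\mlt(Q)$ and then replace it by $\Omega_1(Z(O_p(\mlt(Q))))$, a subgroup defined by a construction that commutes with all automorphisms of $\mlt(Q)$. The remaining ingredients (the $3$-divisibility of $S$, \tref{c1}, \lref{d4}, and the elementary fact that $\Omega_1$ of the centre of a nontrivial $p$-group is nontrivial and elementary abelian) are routine.
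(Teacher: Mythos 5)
Your proof is correct, and its first half coincides with the paper's: you verify that $S$ is $3$-divisible (the paper leaves this implicit), invoke \cref{Cr:Series} and \tref{c1} to exhibit $\mlt_Q(S)$ as a nontrivial subnormal $p$-subgroup of $\mlt(Q)$, and apply \lref{d4} to obtain a nontrivial normal $p$-subgroup. Where you genuinely diverge is the triality assertion. The paper gets it by appending $\mlt(Q)\unlhd\mlt(Q)\rtimes S_3$ to the series \eqref{Eq:Series} and rerunning \lref{d4} in the larger group, which yields a normal $p$-subgroup of $\mlt(Q)\rtimes S_3$; to finish one must still observe that this subgroup lies inside $\mlt(Q)$ (true because $S_3$ has no nontrivial normal $p$-subgroup for $p\ne 3$, a step the paper does not spell out) before taking its center and socle. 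You instead stay inside $\mlt(Q)$ and take $U=\Omega_1(Z(O_p(\mlt(Q))))$, which is characteristic in $\mlt(Q)$ and therefore invariant under \emph{every} automorphism of $\mlt(Q)$, in particular under $\sigma$ and $\rho$. This is a cleaner reduction: the triality claim becomes an immediate corollary of the first claim, with no containment issue to check. What the paper's route buys in exchange is only that it avoids the $p$-core and works with the center and socle of an arbitrary normal $p$-subgroup; the two arguments are otherwise of the same depth, and yours is the tidier of the two.
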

\begin{proof}
It suffices to prove that $\mlt(Q)$ contains a nontrivial normal $p$-subgroup since the center of such a $p$-group is characteristic, and the socle of an abelian $p$-group is characteristic too. By \lref{d4}, it even suffices to show the existence of a subnormal $p$-group.

Corollary \ref{Cr:Series} implies the existence of the subnormal series \eqref{Eq:Series}. By Theorem \ref{c1}, $\mlt_Q(S)$ is a $p$-group.

If $Q$ admits triality automorphisms, the subnormal series \eqref{Eq:Series} may be extended by $\mlt(Q)\unlhd \mlt(Q)\rtimes S_3$.
\end{proof}

\begin{thm}\label{Th:Equivalent}
Let $Q$ be a finite $3$-divisible Moufang loop. Then $Q$ is classically solvable if and only if it is congruence solvable.
\end{thm}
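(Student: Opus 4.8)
Every congruence solvable loop is classically solvable, so the plan is to prove only the converse, by induction on $|Q|$. Thus let $Q$ be a classically solvable finite $3$-divisible Moufang loop of smallest order that is \emph{not} congruence solvable. I will use the two facts recorded in the introduction: for any $X\unlhd Q$, the loop $Q$ is classically solvable iff both $X$ and $Q/X$ are, and if moreover $X$ induces an abelian congruence of $Q$, then $Q$ is congruence solvable iff both $X$ and $Q/X$ are. Since $|X|$ and $|Q/X|$ divide $|Q|$ and are hence coprime to $3$, both $X$ and $Q/X$ are again finite $3$-divisible Moufang loops. Therefore it suffices to produce a \emph{proper} nontrivial $X\unlhd Q$ that induces an abelian congruence of $Q$: by minimality $X$ and $Q/X$ are then congruence solvable, forcing $Q$ to be congruence solvable, a contradiction. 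First I would note that if $Q$ is a group then it is a solvable group, for which classical and congruence solvability coincide, so $Q$ is not a counterexample; hence $Q$ is not a group, and in particular $\nuc(Q)\ne Q$.

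Being nontrivial and classically solvable, $Q$ contains a nontrivial commutative subgroup $A$ that is normal in $Q$ (as noted in the introduction). Applying Lemma~\ref{Lm:ElAb} to $A$, I obtain a prime $p$ and a nontrivial $p$-subgroup $S\le A$ with $S\unlhd Q$. As $|A|$ divides $|Q|$, which is coprime to $3$, we have $p\ne 3$, so $S$ is a $p$-group of order coprime to $3$ and hence $3$-divisible.

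Now I would split on $\nuc(Q)$. If $\nuc(Q)\ne 1$, then $\nuc(Q)\unlhd Q$ is classically solvable, hence a solvable group, so the last nontrivial term $X$ of its derived series is a nontrivial abelian subgroup, characteristic in $\nuc(Q)$. Because every inner mapping of $Q$ restricts to an automorphism of $\nuc(Q)$ --- the standard generators $L_{u,v},R_{u,v}$ fix $\nuc(Q)$ pointwise and each $T_u$ acts on it as an automorphism, cf.~the proof of Proposition~\ref{u4} --- the subgroup $X$ is normal in $Q$, and it is proper since $X\le\nuc(Q)\subsetneq Q$. By Proposition~\ref{u4}, $X$ induces an abelian congruence of $Q$, and the induction closes in this branch. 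If instead $\nuc(Q)=1$, then $Q$ admits triality automorphisms by \cite[Theorem~6]{GlaubermanII}; applying Proposition~\ref{c2} to the $3$-divisible normal $p$-subloop $S$ (with $p\ne 3$) yields a nontrivial normal elementary abelian triality $p$-subgroup $U\unlhd\mlt(Q)$, and then Proposition~\ref{u6} produces a nontrivial normal subloop $X=U(1)$ of $Q$ inducing an abelian congruence of $Q$. Here $X$ is proper because $X=Q$ would force $\mlt(Q)=\mlt_Q(Q)\le U$ by Lemma~\ref{u3}, making $\mlt(Q)$ commutative and $Q$ associative, contrary to $Q$ not being a group. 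This exhausts the cases and completes the induction.

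The genuinely substantial mathematics behind this argument lies in the preceding sections: the subnormal series \eqref{Eq:Series} together with the fact that $\mlt_Q(S)$ is a $p$-group (Theorem~\ref{c1}), which power Proposition~\ref{c2} in the nucleus-trivial branch, and the passage from a commutative triality subgroup of $\mlt(Q)$ to an abelian congruence of $Q$ via Theorem~\ref{a6}, which drives Propositions~\ref{c2} and~\ref{u6}. Granting those, the proof of the theorem is essentially an organized case analysis, and the point I expect to demand the most care is the bookkeeping that simultaneously rules out $X=1$ and $X=Q$ in \emph{both} branches, since the induction applies to $X$ and $Q/X$ only when $X$ is proper and nontrivial; a secondary point is checking that the standing hypotheses of Propositions~\ref{c2} and~\ref{u6} --- $3$-divisibility, $p\ne 3$, and the triality automorphisms supplied by $\nuc(Q)=1$ --- are all in force where those results are invoked.
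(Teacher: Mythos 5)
Your proposal is correct and follows essentially the same route as the paper: the same induction on $|Q|$, the same case split on whether $\nuc(Q)$ is trivial, a characteristic commutative subgroup of the nucleus plus Proposition~\ref{u4} in one branch, and Lemma~\ref{Lm:ElAb}, Proposition~\ref{c2} and Proposition~\ref{u6} in the other. Your extra bookkeeping to rule out $X=Q$ is sound but not actually needed, since a normal subloop $X$ inducing an abelian congruence of $Q$ with $Q/X$ congruence solvable (trivially so when $X=Q$) already makes $Q$ congruence solvable.
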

\begin{proof}
The converse implication holds in general. For the direct implication, let $Q$ be a smallest $3$-divisible Moufang loop that is classically solvable but not congruence solvable. If there is a normal subloop $X$ of $Q$ that induces an abelian congruence of $Q$, then $Q/X$ is $3$-divisible and classically solvable, hence congruence solvable by minimality of $Q$, but then $Q$ is congruence solvable, being an abelian extension of a commutative group $X$ by a congruence solvable loop $Q/X$, a contradiction.

Suppose that $1<N=\nuc(Q)\unlhd Q$. Since $Q$ is classically solvable, the group $N$ is solvable. It therefore contains a nontrivial characteristic commutative subgroup $X\unlhd N$. By \eref{u4}, $\inn(Q)$ acts upon $X$ as a subgroup of $\aut(N)$. Hence $\vhi(X)=X$ for each $\vhi\in \inn(Q)$. But this means that $X$ is a normal subloop of $Q$. Then $X$ induces an abelian congruence of $Q$ by Proposition \ref{u4}.

Now suppose that $\nuc(Q)=1$. Then $Q$ admits triality automorphisms, cf.~Section \ref{triality}. Since $Q$ is classically solvable, it has a series of normal subloops of $Q$ with factors being commutative groups. In particular, $Q$ contains a nontrivial normal commutative subgroup $A$, the first nontrivial term of the series. Let $p$ be any prime dividing $|A|$, necessarily $p\ne 3$. By Lemma \ref{Lm:ElAb}, $Q$ contains a nontrivial normal $p$-subgroup $S$ (contained in $A$). By Proposition \ref{c2}, $\mlt(Q)$ contains a nontrivial normal commutative triality subgroup. By Proposition \ref{u6}, $Q$ contains a nontrivial normal subloop that induces an abelian congruence of $Q$.
\end{proof}

\end{document}